\documentclass[10pt]{amsart}

\usepackage[letterpaper,textwidth=6in,textheight=8in,centering]{geometry}
\usepackage{amsmath,amssymb,amsthm,mathtools}
\usepackage{booktabs}
\usepackage{graphicx}
\usepackage{subcaption}
\usepackage{enumitem}
\usepackage{microtype}
\usepackage{hyperref}
\usepackage{xcolor}
\usepackage{placeins}
\usepackage{float}
\graphicspath{{figures/}}

\hypersetup{
  colorlinks=true,
  linkcolor=blue,
  citecolor=blue,
  urlcolor=blue
}

\newtheorem{theorem}{Theorem}[section]
\newtheorem{proposition}[theorem]{Proposition}
\newtheorem{definition}[theorem]{Definition}

\newcommand{\R}{\mathbb R}
\newcommand{\N}{\mathbb N}
\newcommand{\E}{\mathbb E}
\newcommand{\J}{\mathcal J}
\newcommand{\Dcal}{\mathcal D}
\newcommand{\Acal}{\mathcal A}
\newcommand{\Hcal}{\mathcal H}
\newcommand{\Kcal}{\mathcal K}
\newcommand{\Lcal}{\mathcal L}
\newcommand{\Pcal}{\mathcal P}
\newcommand{\Qcal}{\mathcal Q}
\newcommand{\Rcal}{\mathcal R}
\newcommand{\Scal}{\mathcal S}
\newcommand{\Zcal}{\mathcal Z}
\newcommand{\TV}{\operatorname{TV}}
\newcommand{\Hell}{\operatorname{Hell}}

\title[Resolution-stable sampling under refinement]{Posterior Convergence without Force Convergence: Resolution-Stable Sampling for Rough Bayesian Inverse Problems}

\author{Zhiliang Deng$^*$}
\thanks{$*$ School of Mathematical Sciences, University of Electronic Science and Technology of China, Chengdu, China. Email: dengzhl@uestc.edu.cn}

\author{Xiaomei Yang$^\dag$}
\thanks{$\dag$ School of Mathematics, Southwest Jiaotong University, Chengdu, China. Email: yangxiaomath@swjtu.edu.cn}

\subjclass[2020]{Primary 65C05, 65P10; Secondary 60J22, 28A80, 39A13}
\keywords{uncertainty quantification, Hamiltonian Monte Carlo, Jackson quotient, rough potential, Bayesian inverse problem, homogenization, multiscale media, posterior convergence, resolution stability}

\begin{document}
\raggedbottom

\begin{abstract}
Bayesian targets may converge under model refinement even when the exact sensitivities used by gradient-based samplers do not. We study this probability--sensitivity mismatch and its consequences for Metropolized Hamiltonian proposals. A vanishing-amplitude wiggly-energy model first gives the basic analytic obstruction: the potential perturbation tends to zero while its classical derivative is of order $r_\varepsilon/\varepsilon$. We then show that the same scaling arises naturally in a periodic elliptic inverse problem, where homogenization makes the forward map and Gaussian likelihood converge while differentiation with respect to a microscopic scale parameter retains an $O(1)$ oscillatory contribution. This provides a PDE origin for the single-scale wiggly mechanism. The main construction concerns a more demanding nested Weierstrass hierarchy, interpreted as an analytically tractable prototype for repeated corrector contributions across geometrically separated scales. There all previously resolved scales persist, adjacent classical-force increments grow geometrically like $(ab)^N$, and the limiting rough component may fail to possess a classical derivative. In this self-similar setting the matched Jackson quotient is structurally adapted to the refinement through dilation covariance and exact finite closure. Uniform negative-log-likelihood approximation yields explicit total-variation, Hellinger, and bounded quantity-of-interest bounds. Measurable kick--drift--kick maps remain exact after Metropolis correction, local field convergence propagates to fixed-length proposals and kernels, and the first classical HMC half-kick can have no fixed-step refinement limit. Numerical experiments on scale-structured inverse problems test the resulting resolution-stability mechanism across one- and two-dimensional inverse problems.
\end{abstract}

\maketitle

\section{Introduction}
\label{sec:introduction}

Hamiltonian Monte Carlo (HMC) is naturally formulated for differentiable
potentials: the potential defines a force, a reversible numerical integrator
approximates the Hamiltonian flow, and a Metropolis correction removes the
remaining time-discretization bias. This setting underlies results on tuning,
geometric integration, ergodicity, and dimension-dependent scaling; see \cite{BeskosEtAl2013, BouRabeeSanzSerna2018, DuaneEtAl1987, DurmusMoulinesSaksman2020, HairerLubichWanner2006, Neal2011}. Nonsmooth variants address
convex composite energies, truncation boundaries, finitely many kinks, and
discontinuity interfaces \cite{ChaariEtAl2016,PakmanPaninski2014}. A different
difficulty arises under model refinement: the target may stabilize while the
newly resolved small scales are amplified by differentiation.

A concrete precursor to this difficulty comes from materials mechanics.
Abeyaratne et al. \cite{AbeyaratneChuJames1996} introduced
wiggly-energy models to describe hysteresis and twin evolution observed in
Cu--Al--Ni shape-memory alloys. Their model superposes many small energy
corrugations on a slowly varying background, with the microscopic picture
motivated by repeated tip-splitting events in martensitic microstructures.
Menon \cite{Menon2002} subsequently studied the associated averaging problem
for energies of the form
$$
F(x)+\varepsilon A(x/\varepsilon),
$$
and showed explicitly that convergence at the energy level need not be
accompanied by strong convergence of the gradients. Thus the separation
between a small energy perturbation and a non-small force has both a physical
origin and a precise mathematical formulation.

The same separation appears in related analytical and computational settings.
In variational analysis, Sandier and Serfaty
\cite{SandierSerfaty2004} developed conditions controlling the passage from
energy convergence to convergence of the associated gradient flows.
Wiggly-gradient systems subsequently motivated
energy--dissipation-principle convergence and its relaxed variants
\cite{DondlFrenzelMielke2019}. In MCMC, Plech\'a\v{c} and Simpson
\cite{PlechacSimpson2020} showed that fine-scale roughness can substantially
degrade gradient-based Langevin proposals, while Livingstone and Zanella
\cite{LivingstoneZanella2022} analyzed the loss of robustness of
gradient-based methods under scale heterogeneity. Approximate-gradient HMC
can nevertheless retain the exact target after a suitable Metropolis
correction \cite{LiEtAl2019}. These results clarify the effects of roughness
on dynamics and sampling, but they do not address the refinement question
considered here: whether convergence of a sequence of Bayesian targets is
accompanied by convergence of the sensitivities used to construct
gradient-based proposals.

We study this question first through the elementary family
$$
U_\varepsilon(x)
=
V(x)+r_\varepsilon\psi(x/\varepsilon),
\qquad
r_\varepsilon\to0.
$$
The perturbation of the potential is $O(r_\varepsilon)$, whereas the
derivative of the oscillatory component is of order
$r_\varepsilon/\varepsilon$. Hence the target perturbation may vanish while
the corresponding force remains of order one, or even grows. This elementary
scaling isolates the probability--sensitivity mismatch that will later be
sharpened for the nested Weierstrass hierarchy.

The same mechanism also arises from PDE refinement rather than from an
explicitly prescribed oscillatory potential. In periodic elliptic
homogenization, the state contains small-scale correctors of order
$O(\varepsilon)$, while differentiation with respect to a parameter entering
the fast variable introduces the inverse microscopic scale. Bayesian inverse
homogenization already provides settings in which fine-scale and effective
posteriors converge while microscopic correctors remain relevant
\cite{AbdulleDiBlasio2020, HoangQuek2019}. In Section~\ref{subsec:pde-origin} we consider a one-dimensional periodic-conductivity
inverse problem for which the forward and negative-log-likelihood errors decay approximately linearly in $\varepsilon$, whereas the exact likelihood
sensitivity remains at an $O(1)$ scale. In this example the oscillation is generated by the PDE coefficient itself rather than inserted directly into the posterior.

The main construction is then developed for a more demanding nested Weierstrass hierarchy. At resolution $N$, all coarser oscillations remain present while a new scale is added. The potential tail decays like $a^N$, but the adjacent classical-force increment grows like $(ab)^N$ in the rough regime $ab>1$. Hence the difficulty is more persistent than in a single-scale wiggly perturbation: the refinement accumulates a hierarchy of unresolved scales and the limiting rough component can be genuinely nondifferentiable. This multiplicative self-similarity also supplies additional structure. The Jackson quotient \cite{Jackson1908}, taken at the intrinsic dilation ratio $b$, is covariant under dilation and admits an exact finite closure under the Weierstrass scale recursion \cite{ErzanEckmann1997,YangDeng2025,YangDeng2026}. These properties identify the intrinsic dilation ratio used by the scale-matched $q$-difference proposal studied below; additive and mismatched multiplicative quotients are retained as comparison fields.

The paper makes four contributions. First, it quantifies the probability--force separation for vanishing-amplitude wiggly perturbations and then sharpens it to a geometric refinement law for the nested Weierstrass family. Second, it derives total-variation, Hellinger, and bounded quantity-of-interest bounds from uniform negative-log-likelihood approximation and gives a Gaussian forward-map criterion for Bayesian inverse problems. Third, it proves exactness of Metropolized kick--drift--kick proposals driven by measurable fields and propagates local field convergence to fixed-length proposals, acceptance functions, and Markov kernels on common compact sets. Fourth, it identifies the loss of classical HMC consistency already at the first half-kick and derives the necessary refinement-dependent step-size scale. The matched Jackson construction is a structurally adapted finite-scale field for multiplicatively self-similar refinement.

The remainder of the paper is organized as follows. Section~\ref{sec:rough-structure} begins with the wiggly-energy obstruction and its periodic-PDE origin, then develops the nested Weierstrass hierarchy, the matched Jackson field, and posterior convergence. Section~\ref{sec:resolution-kernels} establishes exactness and refinement convergence of Metropolized finite-scale proposals and contrasts them with the classical half-kick. Section~\ref{sec:algorithms} presents the scale-structured sampling experiments. Section~\ref{sec:conclusion} summarizes the conclusions and limitations. The uniform H\"older estimate used in the proposal analysis is proved in the appendix.

\section{Rough target refinement and finite-scale proposal fields}
\label{sec:rough-structure}

This section separates probability-level convergence from force-level refinement. We first use a single-scale wiggly perturbation to expose the obstruction in its simplest analytic form and then show how the same scaling is generated by a standard periodic elliptic inverse problem. This provides a bridge from homogenization correctors to rough posterior sensitivities. We then pass to the nested Weierstrass hierarchy, viewed as a canonical model for repeated corrector contributions across geometrically separated scales. Its intrinsic multiplicative recursion leads naturally to the matched Jackson field. The section closes with regularity and posterior-convergence estimates used later in the sampling analysis.

\subsection{A motivating wiggly-energy obstruction}
\label{subsec:wiggly-generalization}

Consider a bounded one-periodic profile $\psi$ and microscopic lengths $\varepsilon_n\downarrow0$. Let $r_n>0$ satisfy $r_n\to0$, and define
$$
R_n(x)=r_n\psi\left(\frac{x}{\varepsilon_n}\right),
\qquad
U_n(x)=V(x)+R_n(x).
$$
At the level of the potential, $R_n$ is small because its amplitude is $r_n$. Exact differentiation sees a different scale:
$$
R_n'(x)=\frac{r_n}{\varepsilon_n}\psi'\left(\frac{x}{\varepsilon_n}\right).
$$
The microscopic wavelength therefore enters with the inverse factor $\varepsilon_n^{-1}$.

\begin{proposition}[Vanishing-amplitude probability--force separation]
\label{prop:wiggly-separation}
Let $\psi\in H^1_{\mathrm{per}}(0,1)\cap L^\infty(0,1)$ with $\|\psi'\|_{L^2(0,1)}>0$. Suppose $\varepsilon_n^{-1}\in\N$ and $r_n\to0$. Then
\begin{equation}
\|R_n\|_{L^\infty(\R)}\leq
r_n\|\psi\|_{L^\infty},
\label{eq:wiggly-potential-scale}
\end{equation}
whereas
\begin{equation}
\|R_n'\|_{L^2(0,1)}=
\frac{r_n}{\varepsilon_n}
\|\psi'\|_{L^2(0,1)}.
\label{eq:wiggly-force-scale}
\end{equation}
Consequently, the potential perturbation vanishes while the exact rough-force contribution remains $O(1)$ when $r_n\asymp\varepsilon_n$ and diverges whenever $r_n/\varepsilon_n\to\infty$. In particular, for $r_n=\varepsilon_n^\alpha$ with $0<\alpha<1$,
$$
\|R_n\|_{L^\infty}=O(\varepsilon_n^\alpha),
\qquad
\|R_n'\|_{L^2(0,1)}\asymp\varepsilon_n^{\alpha-1}\longrightarrow\infty.
$$
\end{proposition}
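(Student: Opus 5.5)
The plan is to verify the two displayed estimates directly from the definition $R_n(x)=r_n\psi(x/\varepsilon_n)$ and then read off the consequences. First, for the sup bound \eqref{eq:wiggly-potential-scale}: since $\psi$ is one-periodic and essentially bounded on $(0,1)$, its periodic extension satisfies $\|\psi\|_{L^\infty(\R)}=\|\psi\|_{L^\infty(0,1)}$. Composition with the dilation $x\mapsto x/\varepsilon_n$ preserves null sets, so $\|R_n\|_{L^\infty(\R)}=r_n\|\psi\|_{L^\infty}$, which gives the stated inequality (in fact with equality).

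Second, for \eqref{eq:wiggly-force-scale}, I will first note that $\psi\in H^1_{\mathrm{per}}(0,1)$ has an absolutely continuous periodic representative whose weak derivative is the periodic extension of $\psi'$. The chain rule for weak derivatives under an affine change of variables then gives $R_n'(x)=(r_n/\varepsilon_n)\psi'(x/\varepsilon_n)$ a.e. Squaring, integrating over $(0,1)$, and substituting $y=x/\varepsilon_n$ yields
$$
\|R_n'\|_{L^2(0,1)}^2=\frac{r_n^2}{\varepsilon_n^2}\,\varepsilon_n\int_0^{1/\varepsilon_n}|\psi'(y)|^2\,dy .
$$
This is the one place where the hypothesis $\varepsilon_n^{-1}\in\N$ is used. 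Writing $m_n=\varepsilon_n^{-1}$, the interval $(0,m_n)$ is the union of $m_n$ unit periods. By periodicity each contributes $\|\psi'\|_{L^2(0,1)}^2$, so the integral equals $m_n\|\psi'\|_{L^2(0,1)}^2$. The factor $\varepsilon_n m_n=1$ cancels, and taking square roots gives the exact identity. I expect this bookkeeping to be the only step needing any care. Without integrality one would only get two-sided bounds via the floor and ceiling of $1/\varepsilon_n$, so equality genuinely depends on this hypothesis.

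Finally, the consequences follow from the two identities. Since $r_n\to0$, \eqref{eq:wiggly-potential-scale} shows that the potential perturbation vanishes uniformly. Because $\|\psi'\|_{L^2(0,1)}>0$ is a fixed constant, $\|R_n'\|_{L^2(0,1)}$ is a positive multiple of $r_n/\varepsilon_n$. It is therefore bounded above and below, i.e.\ of exact order one, when $r_n\asymp\varepsilon_n$, and it diverges when $r_n/\varepsilon_n\to\infty$. For $r_n=\varepsilon_n^\alpha$ with $0<\alpha<1$, substitution gives $\|R_n\|_{L^\infty}\le\|\psi\|_{L^\infty}\varepsilon_n^\alpha$ and $\|R_n'\|_{L^2(0,1)}=\|\psi'\|_{L^2(0,1)}\varepsilon_n^{\alpha-1}$. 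The latter tends to infinity because $\alpha-1<0$ and $\varepsilon_n\downarrow0$.
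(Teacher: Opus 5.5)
Your proposal is correct and follows essentially the same route as the paper: the sup bound is immediate, and the $L^2$ identity comes from the chain rule $R_n'(x)=(r_n/\varepsilon_n)\psi'(x/\varepsilon_n)$ together with the substitution $y=x/\varepsilon_n$ over exactly $\varepsilon_n^{-1}\in\N$ periods. You add a few details that the paper leaves implicit: the weak-derivative justification, equality in the sup bound, and the observation that integrality is what makes the identity exact.
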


\begin{proof}
The first bound is immediate. Since
$$
R_n'(x)=\frac{r_n}{\varepsilon_n}\psi'\left(\frac{x}{\varepsilon_n}\right),
$$
periodicity and $\varepsilon_n^{-1}\in\N$ give
$$
\int_0^1\left|\psi'\left(\frac{x}{\varepsilon_n}\right)\right|^2dx=\|\psi'\|_{L^2(0,1)}^2,
$$
which proves equation~\eqref{eq:wiggly-force-scale}.
\end{proof}

\subsection{A periodic elliptic inverse problem generating the wiggly mechanism}
\label{subsec:pde-origin}

The preceding scaling also arises from a standard multiscale PDE. Periodic and
locally periodic coefficients are classical models for composite and porous media,
and Bayesian inverse homogenization considers the recovery of microscopic
coefficients or macroscopic parameterizations from observations of fine-scale
solutions \cite{AbdulleDiBlasio2020,HoangQuek2019}. The point relevant here is
that an $O(\varepsilon)$ corrector in the state need not produce an
$O(\varepsilon)$ parameter sensitivity when the unknown enters the fast phase.

Consider
\begin{equation}
-\frac{d}{dx}
\left[
k\left(\frac{\theta x}{\varepsilon}\right)
\frac{dp_\varepsilon}{dx}(x;\theta)
\right]
=1,
\qquad
p_\varepsilon(0;\theta)=p_\varepsilon(1;\theta)=0,
\label{eq:periodic-pde}
\end{equation}
where $k$ is positive, one-periodic, and $C^1$, and
$\theta\in[\theta_-,\theta_+]\subset(0,\infty)$ calibrates the microscopic
wavelength $\varepsilon/\theta$. Set
$$
m(y)=\frac{1}{k(y)},
\qquad
\overline m=\int_0^1 m(y)\,dy,
\qquad
k_{\rm hom}=\overline m^{-1}.
$$

The one-dimensional structure gives an exact representation of the solution.
Integrating equation~\eqref{eq:periodic-pde} once yields
$$
k\left(\frac{\theta x}{\varepsilon}\right)
p_\varepsilon'(x;\theta)
=
C_\varepsilon(\theta)-x.
$$
Hence
$$
p_\varepsilon'(x;\theta)
=
\left[C_\varepsilon(\theta)-x\right]
m\left(\frac{\theta x}{\varepsilon}\right).
$$
Integrating from $0$ to $x$ and using $p_\varepsilon(0;\theta)=0$ gives
\begin{equation}
p_\varepsilon(x;\theta)
=
C_\varepsilon(\theta)I_\varepsilon(x;\theta)
-
J_\varepsilon(x;\theta),
\label{eq:periodic-flux-representation}
\end{equation}
where
\begin{align*}
I_\varepsilon(x;\theta)
&=
\int_0^x
m\left(\frac{\theta s}{\varepsilon}\right)\,ds,
\\
J_\varepsilon(x;\theta)
&=
\int_0^x
s\,m\left(\frac{\theta s}{\varepsilon}\right)\,ds.
\end{align*}
The second boundary condition gives
$$
C_\varepsilon(\theta)
=
\frac{J_\varepsilon(1;\theta)}
{I_\varepsilon(1;\theta)}.
$$
Since $k>0$, one has $I_\varepsilon(1;\theta)>0$, so the representation is
well defined.

To identify the homogenization scale, choose the one-periodic corrector
$\chi$ with zero mean such that
$$
\chi'(y)=m(y)-\overline m,
\qquad
\int_0^1\chi(y)\,dy=0.
$$
Then
\begin{equation}
I_\varepsilon(x;\theta)
=
\overline m x
+
\frac{\varepsilon}{\theta}
\left[
\chi\left(\frac{\theta x}{\varepsilon}\right)-\chi(0)
\right].
\label{eq:periodic-corrector-identity}
\end{equation}
Since $\chi$ has zero mean, let $\Xi$ be a one-periodic primitive satisfying
$\Xi'=\chi$. Integration by parts gives
\begin{equation}
J_\varepsilon(x;\theta)
=
\frac{\overline m}{2}x^2
+
\frac{\varepsilon x}{\theta}
\chi\left(\frac{\theta x}{\varepsilon}\right)
-
\left(\frac{\varepsilon}{\theta}\right)^2
\left[
\Xi\left(\frac{\theta x}{\varepsilon}\right)-\Xi(0)
\right].
\label{eq:periodic-J-corrector}
\end{equation}
In particular,
\begin{align*}
I_\varepsilon(1;\theta)=
\overline m+\frac{\varepsilon}{\theta}
\left[\chi\left(\frac{\theta}{\varepsilon}\right)-\chi(0)\right],
\quad
J_\varepsilon(1;\theta)=\frac{\overline m}{2}+
\frac{\varepsilon}{\theta}\chi\left(\frac{\theta}{\varepsilon}\right)+O(\varepsilon^2),
\end{align*}
uniformly for $\theta\in[\theta_-,\theta_+]$. Expanding the quotient gives
\begin{equation}
C_\varepsilon(\theta)=\frac12+
\frac{\varepsilon}{2\theta\overline m}
\left[\chi\left(\frac{\theta}{\varepsilon}\right)+\chi(0)\right]+O(\varepsilon^2).
\label{eq:C-eps-expansion}
\end{equation}
Consequently,
$$
p_\varepsilon(x;\theta)=p_0(x)+O(\varepsilon)
$$
uniformly in $x$ and $\theta$, where
\begin{equation}
p_0(x)=\frac{\overline m}{2}x(1-x)=\frac{x(1-x)}{2k_{\rm hom}}.
\label{eq:periodic-homogenized-solution}
\end{equation}

The parameter sensitivity has a different scale. Writing $Y=\frac{\theta x}{\varepsilon}$,
the integral quantities satisfy the exact identities
\begin{equation}
\partial_\theta I_\varepsilon(x;\theta)=
\frac{x\,m(Y)-I_\varepsilon(x;\theta)}{\theta},
\qquad
\partial_\theta J_\varepsilon(x;\theta)=
\frac{x^2m(Y)-2J_\varepsilon(x;\theta)}{\theta}.
\label{eq:periodic-IJ-sensitivity}
\end{equation}
Therefore
\begin{equation}
\partial_\theta C_\varepsilon(\theta)=
\frac{\partial_\theta J_\varepsilon(1;\theta)I_\varepsilon(1;\theta)-J_\varepsilon(1;\theta)\partial_\theta I_\varepsilon(1;\theta)}{I_\varepsilon(1;\theta)^2},
\label{eq:periodic-C-sensitivity}
\end{equation}
and
\begin{equation}
\partial_\theta p_\varepsilon(x;\theta)
=
\partial_\theta C_\varepsilon(\theta)I_\varepsilon(x;\theta)
+
C_\varepsilon(\theta)\partial_\theta I_\varepsilon(x;\theta)
-
\partial_\theta J_\varepsilon(x;\theta).
\label{eq:periodic-p-sensitivity}
\end{equation}
The first identity in equation~\eqref{eq:periodic-IJ-sensitivity} is equivalent to
\begin{equation}
\partial_\theta I_\varepsilon(x;\theta)
=
-
\frac{\varepsilon}{\theta^2}
\left[
\chi\left(\frac{\theta x}{\varepsilon}\right)-\chi(0)
\right]
+
\frac{x}{\theta}
\left[
m\left(\frac{\theta x}{\varepsilon}\right)-\overline m
\right].
\label{eq:periodic-sensitivity-identity}
\end{equation}
The first term is $O(\varepsilon)$, whereas the second remains oscillatory at
$O(1)$. Thus the $O(\varepsilon)$ state corrector does not imply an
$O(\varepsilon)$ parameter sensitivity.

We now quantify this separation. Let $k(y)=\exp\left[0.8\sin(2\pi y)\right]$,
$\theta\in[0.9,1.1]$,
and observe the solution at $x_j\in\{0.17,0.31,0.46,0.64,0.83\}$.
Define $\mathcal G_\varepsilon(\theta)=\left(p_\varepsilon(x_1;\theta),\ldots,p_\varepsilon(x_5;\theta)\right)^\top$
and $\mathcal G_0=\left(p_0(x_1),\ldots,p_0(x_5)\right)^\top$.
The synthetic data are fixed as
$$
y=\mathcal G_0+\sigma
\begin{pmatrix}
0.45 & -0.30 & 0.60 & -0.55 & 0.25
\end{pmatrix}^{\!\top},
\qquad
\sigma=0.03.
$$
For inference we adopt the Gaussian observation model with covariance $\sigma^2 I$, giving the negative log-likelihood
The Gaussian negative log-likelihood is
\begin{equation}
\Lcal_\varepsilon(\theta)=
\frac{1}{2\sigma^2}
\left\|
\mathcal G_\varepsilon(\theta)-y
\right\|_2^2,
\label{eq:periodic-likelihood}
\end{equation}
with
$$
\Lcal_0=\frac{1}{2\sigma^2}
\left\|\mathcal G_0-y\right\|_2^2.
$$
Here $\mathcal G_0$ is independent of $\theta$, and hence $\Lcal_0$ is
constant. The exact likelihood sensitivity is
\begin{equation}
\Lcal_\varepsilon'(\theta)=\frac{1}{\sigma^2}
\left[\mathcal G_\varepsilon(\theta)-y\right]^\top
\partial_\theta\mathcal G_\varepsilon(\theta),
\label{eq:periodic-likelihood-gradient}
\end{equation}
where $\partial_\theta\mathcal G_\varepsilon$ is evaluated from
equation~\eqref{eq:periodic-p-sensitivity}, without finite differencing in
$\theta$.

For each $\varepsilon$, define
\begin{align*}
\begin{aligned}
&E_{\rm fwd}(\varepsilon)=\sup_{\theta\in[0.9,1.1]}
\left\|\mathcal G_\varepsilon(\theta)-\mathcal G_0\right\|_2,
&&E_{\Lcal}(\varepsilon)=\sup_{\theta\in[0.9,1.1]}
\left|\Lcal_\varepsilon(\theta)-\Lcal_0\right|,
\\
&S_{\rm rms}(\varepsilon)=\left[\frac{1}{0.2}\int_{0.9}^{1.1}\left|
\Lcal_\varepsilon'(\theta)\right|^2\,d\theta\right]^{1/2},
&&S_{\max}(\varepsilon)=\sup_{\theta\in[0.9,1.1]}
\left|\Lcal_\varepsilon'(\theta)\right|.
\end{aligned}
\end{align*}
The first two quantities measure convergence at the forward and
negative-log-likelihood levels. The latter two measure the root-mean-square
and maximum magnitudes of the likelihood sensitivity over the parameter
interval.

We use $\varepsilon=2^{-4},\ldots,2^{-9}$ and evaluate
equations~\eqref{eq:periodic-flux-representation} and
\eqref{eq:periodic-p-sensitivity} on a grid of $101$ equally spaced values of
$\theta$.

\begin{figure}[H]
\centering
\begin{subfigure}[t]{0.49\textwidth}
\centering
\includegraphics[width=\textwidth]{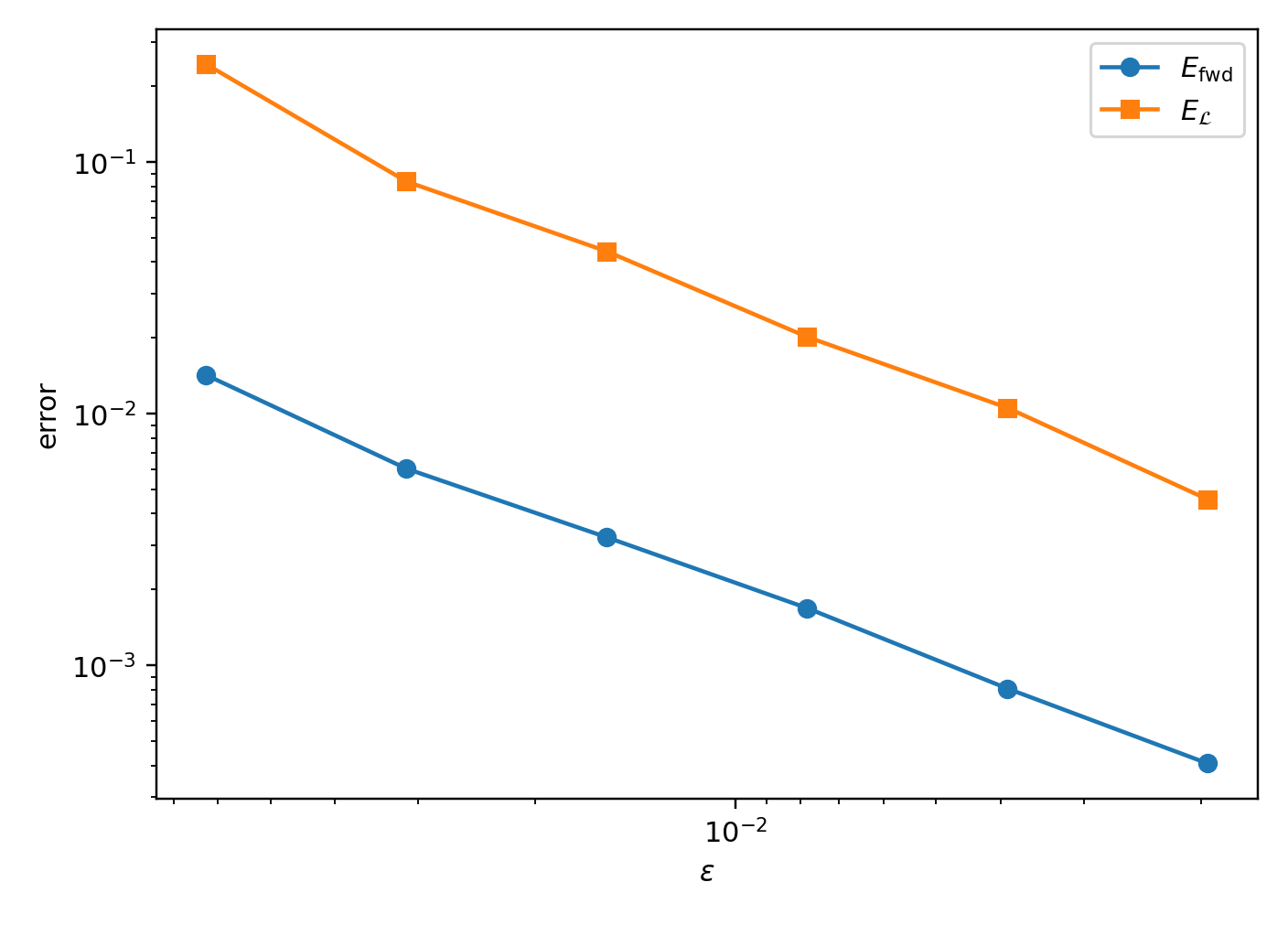}
\caption{Forward and negative-log-likelihood errors.}
\label{fig:periodic-pde-convergence}
\end{subfigure}
\hfill
\begin{subfigure}[t]{0.49\textwidth}
\centering
\includegraphics[width=\textwidth]{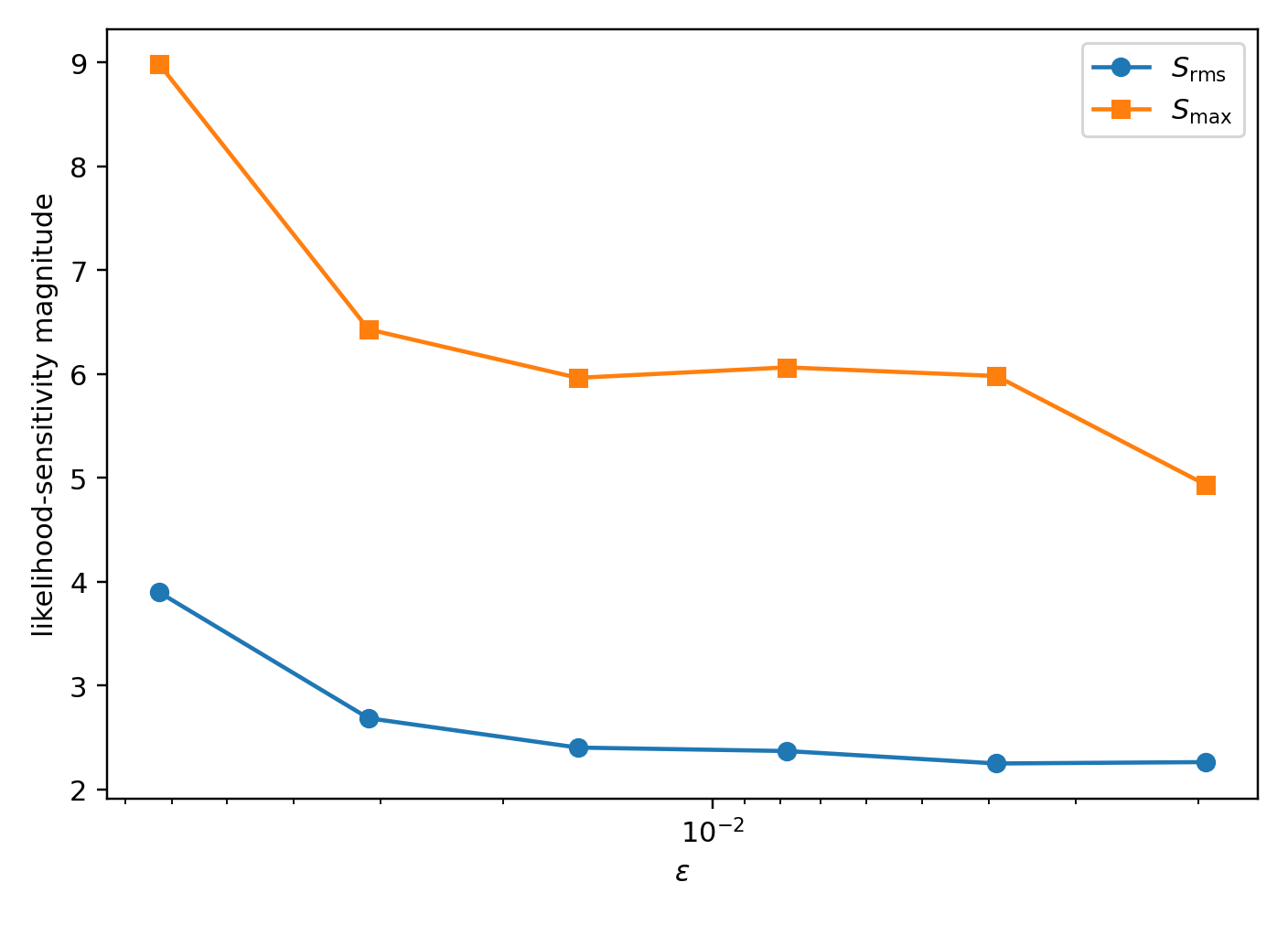}
\caption{Likelihood-sensitivity magnitudes.}
\label{fig:periodic-pde-sensitivity}
\end{subfigure}
\caption{
Refinement diagnostics for the periodic elliptic inverse problem.
Left: $E_{\rm fwd}$ and $E_{\Lcal}$.
Right: $S_{\rm rms}$ and $S_{\max}$.
}
\label{fig:periodic-pde-diagnostic}
\end{figure}

Figure~\ref{fig:periodic-pde-diagnostic} summarizes the four refinement
diagnostics. In Figure~\ref{fig:periodic-pde-convergence}, both
$E_{\rm fwd}$ and $E_{\Lcal}$ decrease steadily as $\varepsilon$ is refined.
By contrast, Figure~\ref{fig:periodic-pde-sensitivity} shows that
$S_{\rm rms}$ and $S_{\max}$ remain at an $O(1)$ scale.

Over the four finest levels, log--log regression gives slopes $1.002$ for
$E_{\rm fwd}$ and $1.076$ for $E_{\Lcal}$. The fitted slope of
$S_{\rm rms}$ is $0.033$. From $\varepsilon=2^{-6}$ to $2^{-9}$,
$S_{\rm rms}$ changes only from $2.40$ to $2.26$, while $S_{\max}$ remains
between $4.94$ and $6.07$. At the finest level,
$$
E_{\rm fwd}=4.07\times10^{-4},
\qquad
E_{\Lcal}=4.56\times10^{-3},
\qquad
S_{\rm rms}=2.26.
$$
The forward map and the negative log-likelihood therefore converge at
essentially first order, whereas the exact likelihood sensitivity remains
nonvanishing over the same sequence of microscopic scales.

The periodic example contains one microscopic scale. Real composite and
porous materials may instead contain several separated periodic scales, and
reiterated homogenization is designed for such nested structures
\cite{AllaireBriane1996,NiuShenXu2020}. Hierarchical porous microstructures
spanning multiple physical scales are also used explicitly in material models
\cite{HeWangPindera2019}. A parameter-dependent corrector expansion can have
the schematic form
$$
\mathcal F_N(\theta)
=
\mathcal F_0(\theta)
+
\sum_{n=0}^{N}
r_n\,
\psi_n\left(\theta,\frac{\theta}{\varepsilon_n}\right)
+
\text{higher-order terms}.
$$
If the separated wavelengths and corrector amplitudes follow approximate
geometric laws
$$
\varepsilon_n\asymp b^{-n},
\qquad
r_n\asymp a^n,
$$
and the microscopic profiles repeat or remain comparable across levels, then
freezing the slow dependence leads to the canonical prototype
$$
\sum_{n=0}^{N}a^n\phi(b^n\theta).
$$
The Weierstrass hierarchy is used here as an analytically tractable model of
repeated periodic corrector contributions under geometric scale separation;
an exact Weierstrass representation is not assumed for a general hierarchical
inverse problem. The next subsection studies the resulting refinement
mismatch.

\subsection{Nested Weierstrass refinement and discrete scale recursion}
\label{subsec:rough-target-family}

Let
$$
V:(0,\infty)\longrightarrow\R
$$
be a measurable baseline potential satisfying
$$
Z_V=\int_0^\infty\exp\left[-V(x)\right]\,dx<\infty.$$
Let $\phi:\R\longrightarrow\R$
be bounded and one-periodic. For $0<a<1$, $b>1$,
define the finite-resolution and limiting rough components by
$$
W_N(x)=\sum_{n=0}^{N}a^n\phi\left(b^nx\right),
\qquad
W(x)=\sum_{n=0}^{\infty}a^n\phi\left(b^nx\right).
$$
For a fixed amplitude parameter $\kappa\in\R$,
we study the potentials
\begin{equation}
U_N(x)=V(x)+\kappa W_N(x),
\qquad
U(x)=V(x)+\kappa W(x).
\label{eq:rough-potentials}
\end{equation}
Their normalizing constants are
$$
Z_N
=
\int_0^\infty
\exp\left[-U_N(x)\right]
\,dx,
\qquad
Z
=
\int_0^\infty
\exp\left[-U(x)\right]
\,dx,
$$
and the corresponding target densities are
$$
\pi_N(x)
=
\frac{\exp\left[-U_N(x)\right]}{Z_N},
\qquad
\pi(x)
=
\frac{\exp\left[-U(x)\right]}{Z}.
$$
Thus $Z_V$ is only the normalizing integral of the baseline density; the full rough targets are normalized by $Z_N$ and $Z$, respectively.

Since $\phi$ is bounded and $0<a<1$, the series defining $W$ converges uniformly and
\begin{equation}
\left\|W-W_N\right\|_{L^\infty(\R)}
\leq
\frac{\|\phi\|_{L^\infty}}{1-a}
a^{N+1}.
\label{eq:potential-tail}
\end{equation}
Moreover,
$$
\|W_N\|_{L^\infty(\R)}
\leq
\frac{\|\phi\|_{L^\infty}}{1-a},
\qquad
\|W\|_{L^\infty(\R)}
\leq
\frac{\|\phi\|_{L^\infty}}{1-a}.
$$
Consequently,
$$
M=\frac{|\kappa|\|\phi\|_{L^\infty}}{1-a},
$$
and
\begin{align*}
e^{-M}Z_V\leq Z_N\leq e^MZ_V,
\quad
e^{-M}Z_V\leq Z \leq e^MZ_V.
\end{align*}
Thus all normalizing constants are finite and strictly positive.

The central refinement question is now visible at the level of the sampling problem. Although
$$
U_N\longrightarrow U
$$
uniformly and the target densities converge, the classical force fields associated with $U_N$ need not possess a refinement limit. The following scale recursion identifies the multiplicative finite-scale structure that remains compatible with the rough component.

A direct shift of the summation index gives
$$
W(bx)
=
\frac{1}{a}
\left[
W(x)-\phi(x)
\right].
$$
More generally, for every integer $m\geq1$,
\begin{equation}
W\left(b^mx\right)
=
\frac{1}{a^m}
\left[
W(x)
-
\sum_{j=0}^{m-1}
a^j\phi\left(b^jx\right)
\right].
\label{eq:scale-recursion-m}
\end{equation}
The finite truncations satisfy the corresponding relation with one terminal remainder,
$$
W_N(bx)
=
\frac{1}{a}
\left[
W_N(x)-\phi(x)
\right]
+
a^N\phi\left(b^{N+1}x\right).
$$
These identities single out the dilation ratios $b^m$ used in the matched Jackson construction.

The comparison with the periodic PDE model is now precise.  The single-scale corrector in Section~\ref{subsec:pde-origin} is replaced as $\varepsilon$ decreases, whereas $W_N$ retains every previously resolved scale and adds the new term $a^N\phi(b^Nx)$.  The mismatch is therefore more pronounced in the refinement sense for three reasons: the unresolved scales accumulate, the adjacent classical-force increments grow geometrically while the potential tail decays geometrically, and the limiting $W$ may remain genuinely nondifferentiable.  These facts are quantified in equation~\eqref{eq:potential-tail} and Theorem~\ref{thm:classical-force-growth}.  At the same time, the hierarchy has extra structure: all frequencies are related by the intrinsic dilation $x\mapsto bx$.  It is this discrete multiplicative self-similarity, rather than roughness alone, that motivates the matched Jackson construction.

\subsection{The Jackson quotient as a scale-covariant two-point operator}

This subsection identifies the multiplicative difference structure used later. We first explain the elementary normalization of the Jackson quotient, then record its covariance under dilation, and finally use the Weierstrass recursion to obtain exact closure at matched dilation ratios.

For $c>1$, let
$$
\Scal_c f(x)
=
f(cx)
$$
be the dilation operator. Define
\begin{equation*}
\J_c f(x)=
\frac{f(cx)-f(x)}{(c-1)x},
\qquad x>0.
\end{equation*}
The normalization is canonical within the class of two-point operators. If
$$
\Dcal_cf(x)=A_c(x)f(cx)+B_c(x)f(x)
$$
annihilates constants and satisfies $\Dcal_c x=1$, then
$$
A_c(x)=\frac{1}{(c-1)x},
\qquad
B_c(x)=-\frac{1}{(c-1)x},
$$
so $\Dcal_c=\J_c$. Thus $\J_c$ is the normalized quotient associated with the multiplicative displacement $x\mapsto cx$.

More importantly for the analysis below, the operator is covariant under dilation. For $\Scal_sf(x)=f(sx)$,
one has
\begin{equation}
\J_c\Scal_s=s\Scal_s\J_c.
\label{eq:scale-covariance}
\end{equation}
%Indeed,
%\begin{align*}
%\J_c\Scal_sf(x)=
%\frac{f(scx)-f(sx)}{(c-1)x}=
%s\frac{f(c(sx))-f(sx)}{(c-1)sx}=s\Scal_s\J_cf(x).
%\end{align*}
The covariance relation is the multiplicative analogue of the usual derivative identity $\frac{d}{dx}f(sx)=sf'(sx)$.
It is not shared by an additive quotient with a fixed external length scale. Indeed, for
$$
\Delta_\delta f(x)=\frac{f(x+\delta)-f(x)}{\delta},
\qquad
\delta\neq0,
$$
one has
\begin{equation*}
\Delta_\delta\Scal_s=
s\Scal_s\Delta_{s\delta}.
\end{equation*}
Thus the additive increment must be rescaled whenever the coordinate is dilated.

Matching the quotient ratio to the intrinsic dilation gives more than covariance: it gives an exact finite representation.
\begin{proposition}[Exact closure at matched dilation scales]
\label{prop:matched-closure}
For every integer $m\geq1$ and every $x>0$,
\begin{equation}
\J_{b^m}W(x)=\frac{\left(a^{-m}-1\right)W(x)-a^{-m}\displaystyle\sum_{j=0}^{m-1}a^j\phi\left(b^j x\right)}{\left(b^m-1\right)x}.
\label{eq:matched-closure}
\end{equation}
In particular,
\begin{equation}
\J_bW(x)=\frac{\left(a^{-1}-1\right)W(x)-a^{-1}\phi(x)}{(b-1)x}.
\label{eq:matched-closed-force}
\end{equation}
\end{proposition}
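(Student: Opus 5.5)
The plan is to substitute the scale recursion \eqref{eq:scale-recursion-m} directly into the definition of $\J_{b^m}$. Since $\phi$ is bounded and $0<a<1$, the series defining $W$ converges absolutely and uniformly. Hence $W$ is a well-defined bounded function on $\R$, and $W(b^mx)$ makes sense for every $x>0$.

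First I would re-derive \eqref{eq:scale-recursion-m} carefully, since the proof rests on it. By definition,
\begin{equation*}
W(b^mx)=\sum_{n=0}^{\infty}a^n\phi\left(b^{n+m}x\right).
\end{equation*}
Reindexing with $k=n+m$ turns this into $a^{-m}\sum_{k\geq m}a^k\phi(b^kx)$. That sum equals $a^{-m}\bigl[W(x)-\sum_{j=0}^{m-1}a^j\phi(b^jx)\bigr]$. The reindexing is legitimate because both series converge absolutely, being dominated by a geometric series with ratio $a$. Alternatively, one can iterate the one-step identity $W(bx)=a^{-1}[W(x)-\phi(x)]$ by induction on $m$. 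Using the identity at the point $b^{m}x$ together with the inductive hypothesis gives the $(m+1)$ case after collecting the term $a^{-m-1}a^{m}\phi(b^mx)$.

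Next I would insert this expression into
\begin{equation*}
\J_{b^m}W(x)=\frac{W(b^mx)-W(x)}{(b^m-1)x}.
\end{equation*}
The numerator becomes $a^{-m}W(x)-W(x)-a^{-m}\sum_{j=0}^{m-1}a^j\phi(b^jx)$, which is exactly the numerator in \eqref{eq:matched-closure}. The denominator is nonzero because $b>1$ and $x>0$. Setting $m=1$ then gives \eqref{eq:matched-closed-force}.

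No genuine obstacle is expected. The only point requiring care is justifying the index shift in the infinite series, and absolute convergence handles it. It is worth remarking, though not needed for the proof, that no differentiability of $\phi$ or $W$ is used: the identity holds pointwise for arbitrary bounded one-periodic $\phi$.
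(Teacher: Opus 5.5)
Your proposal is correct and matches the paper's proof: the paper also obtains \eqref{eq:scale-recursion-m} by the same index shift, substitutes it into the definition of $\J_{b^m}$, and sets $m=1$ for \eqref{eq:matched-closed-force}. Your justification of the reindexing is more than is needed, since shifting the index of a convergent series requires only convergence, not absolute convergence.
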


\begin{proof}
Insert equation~\eqref{eq:scale-recursion-m} into the definition of $\J_{b^m}$.
\end{proof}

For every fixed $c>1$, the sequence $\J_cW_N$ converges locally uniformly because $W_N$ converges uniformly, so convergence alone does not single out $c=b$. The matched choices $c=b^m$ are distinguished at the canonical rough-component level by the finite closure in equation~\eqref{eq:matched-closure}. In the inverse problems below, the same intrinsic dilation is used to define a matched finite-scale quotient of the likelihood; no exact closure is asserted for the nonlinear likelihood itself.

The multiplicative nature is also transparent in logarithmic coordinates. With
$$
\Acal_cf(x)=x\J_cf(x)=\frac{f(cx)-f(x)}{c-1},
\qquad
g(y)=f(e^y),
$$
one has
$$
\Acal_cf(e^y)=\frac{g(y+\log c)-g(y)}{c-1}.
$$
Thus a fixed relative displacement in $x$ is a fixed translation in $\log x$.  The logarithmic representation clarifies the geometry, while the
Jackson quotient retains the same relative-scale structure directly in the
original variables.  This can be useful for black-box and coupled models,
where an explicit logarithmic reparameterization of the complete forward
map may be undesirable; see \cite{YangDeng2025}.

\subsection{Classical and finite-scale force refinement}
\label{subsec:opposite-refinement-laws}

This subsection contains the basic separation mechanism. Exact differentiation amplifies the newly resolved oscillations, whereas a fixed finite-scale quotient is continuous with respect to uniform potential approximation. We first prove the precise classical growth law and the multiplicative-quotient convergence estimate. The additive analogue then follows directly from the same uniform tail bound.

Assume first that
$$
\phi\in H^1_{\mathrm{per}}(0,1),
\qquad
\|\phi'\|_{L^2(0,1)}>0,
$$
and that
$$
0<a<1<ab,
\qquad
b>1.
$$
Since $W_N$ is a finite sum, its weak derivative is
$$
W_N'(x)
=
\sum_{n=0}^{N}
(ab)^n\phi'\left(b^nx\right).
$$

\begin{theorem}[Growth of classical-force increments]
\label{thm:classical-force-growth}
Under the assumptions above, define
\begin{equation}
\zeta_N
=
\left[
\frac{1}{b^N}
\int_0^{b^N}
|\phi'(u)|^2\,du
\right]^{1/2}.
\label{eq:classical-rhoN}
\end{equation}
Then
\begin{equation}
\left\|
W_N'-W_{N-1}'
\right\|_{L^2(0,1)}
=
(ab)^N\zeta_N,
\label{eq:classical-increment-exact}
\end{equation}
and
$$
\zeta_N\longrightarrow\|\phi'\|_{L^2(0,1)}.
$$
Consequently,
$$
\left\|
W_N'-W_{N-1}'
\right\|_{L^2(0,1)}
\sim
(ab)^N\|\phi'\|_{L^2(0,1)},
$$
so the adjacent increments diverge and the sequence
$\{W_N'\}_{N\geq0}$ is not Cauchy in $L^2(0,1)$. If
$b\in\N$ and $b\geq2$, then
$\zeta_N=\|\phi'\|_{L^2(0,1)}$ for every $N$.
\end{theorem}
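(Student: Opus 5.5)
The plan is to reduce everything to the single newly added term and then average a periodic function over a long interval. Since $W_N-W_{N-1}=a^N\phi(b^N\cdot)$, the weak derivative of the increment is
\[
W_N'(x)-W_{N-1}'(x)=(ab)^N\phi'(b^Nx),
\]
understood almost everywhere; this holds because $\phi\in H^1_{\mathrm{per}}$ is absolutely continuous on bounded intervals.

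To get the identity \eqref{eq:classical-increment-exact}, I will substitute $u=b^Nx$:
\[
\int_0^1|\phi'(b^Nx)|^2\,dx=\frac{1}{b^N}\int_0^{b^N}|\phi'(u)|^2\,du=\zeta_N^2.
\]
Multiplying by $(ab)^{2N}$ and taking square roots gives \eqref{eq:classical-increment-exact} exactly.

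To show $\zeta_N\to\|\phi'\|_{L^2(0,1)}$, I will use periodic averaging. Write $T=b^N$ and $T=k+\tau$ with $k=\lfloor T\rfloor$ and $\tau\in[0,1)$. One-periodicity of $|\phi'|^2$ gives
\[
\int_0^T|\phi'|^2=k\|\phi'\|_{L^2(0,1)}^2+\int_0^\tau|\phi'|^2.
\]
The remainder lies between $0$ and $\|\phi'\|_{L^2(0,1)}^2$. Dividing by $T$, we have $k/T\to1$ because $T\to\infty$ (as $b>1$), and the remainder divided by $T$ tends to $0$. Hence $\zeta_N^2\to\|\phi'\|^2_{L^2(0,1)}$. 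If $b\in\N$ with $b\ge2$, then $T$ is an integer, so $\tau=0$ and $\zeta_N=\|\phi'\|_{L^2(0,1)}$ exactly for every $N$.

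The consequences then follow directly. Since $\|\phi'\|_{L^2(0,1)}>0$, we get $(ab)^N\zeta_N\sim(ab)^N\|\phi'\|_{L^2(0,1)}$. Because $ab>1$, the adjacent increments tend to infinity. A Cauchy sequence in $L^2(0,1)$ must have adjacent differences tending to zero, so $\{W_N'\}$ is not Cauchy.

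There is no real obstacle here. The only point that needs care is the partial-period remainder in the averaging step when $b^N$ is not an integer, and the floor decomposition above handles it. I will also note that the weak-derivative formula for the finite sum is already recorded just before the theorem, so no separate justification is needed.
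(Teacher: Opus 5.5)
Your proposal is correct and follows the paper's proof: you isolate the new term $(ab)^N\phi'(b^Nx)$, change variables $u=b^Nx$, and use convergence of long-interval averages of the one-periodic function $|\phi'|^2$, with exact periods when $b\in\N$. Your decomposition $b^N=k+\tau$ with $k=\lfloor b^N\rfloor$ just spells out the averaging step that the paper states without detail.
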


\begin{proof}
The adjacent derivative increment is
$$
W_N'(x)-W_{N-1}'(x)
=
(ab)^N\phi'\left(b^Nx\right).
$$
The change of variables $u=b^Nx$ gives
$$
\left\|
W_N'-W_{N-1}'
\right\|_{L^2(0,1)}^2
=
(ab)^{2N}
\frac{1}{b^N}
\int_0^{b^N}|\phi'(u)|^2\,du,
$$
which proves equation~\eqref{eq:classical-increment-exact}. Since
$|\phi'|^2$ is one-periodic and integrable, its averages over $[0,T]$
converge as $T\to\infty$ to its mean over one period. Hence
$\zeta_N\to\|\phi'\|_{L^2(0,1)}$. The remaining conclusions follow from
$ab>1$ and $\|\phi'\|_{L^2(0,1)}>0$. For integer $b$, the interval
$[0,b^N]$ consists of exactly $b^N$ periods.
\end{proof}

We now return to the general bounded periodic profile. Neither differentiability of $\phi$ nor integrality of $b$ is needed for the multiplicative-difference estimates.

\begin{theorem}[Weighted and local convergence of Jackson forces]
\label{thm:jackson-force-convergence}
Let $\phi$ be bounded, let $0<a<1$, and fix $b>1$ and $c>1$. Then
\begin{equation}
\sup_{x>0}
x
\left|
\J_cW_N(x)-\J_cW_{N-1}(x)
\right|
\leq
\frac{2\|\phi\|_{L^\infty}}{c-1}
a^N.
\label{eq:weighted-jackson-increment}
\end{equation}
Moreover,
\begin{equation}
\sup_{x>0}
x
\left|
\J_cW_N(x)-\J_cW(x)
\right|
\leq
\frac{2\|\phi\|_{L^\infty}}{c-1}
\frac{a^{N+1}}{1-a}.
\label{eq:weighted-jackson-tail}
\end{equation}
Consequently, for every compact interval $I=[r,R]\subset(0,\infty)$,
\begin{equation}
\left\|
\J_cW_N-\J_cW
\right\|_{L^\infty(I)}
\leq
\frac{2\|\phi\|_{L^\infty}}{(c-1)r}
\frac{a^{N+1}}{1-a}.
\label{eq:local-jackson-tail}
\end{equation}
In particular, $\J_cW_N\to\J_cW$ locally uniformly on $(0,\infty)$.
\end{theorem}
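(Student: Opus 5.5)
The plan is to exploit the linearity of $\J_c$ together with the observation that the weight $x$ exactly cancels the factor $x$ in the denominator of the quotient. For any bounded $f$ and $x>0$,
$$
x\,\J_cf(x)=\frac{f(cx)-f(x)}{c-1},
\qquad\text{so}\qquad
\sup_{x>0}x\left|\J_cf(x)\right|\leq\frac{2\|f\|_{L^\infty}}{c-1}.
$$
This is the only estimate needed. No differentiability of $\phi$ enters, and the relation between $c$ and $b$ plays no role.

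For \eqref{eq:weighted-jackson-increment}, apply this bound with $f=W_N-W_{N-1}=a^N\phi(b^N\cdot)$. Linearity of $\J_c$ gives $\J_cW_N-\J_cW_{N-1}=\J_cf$. Since $\|f\|_{L^\infty}\leq a^N\|\phi\|_{L^\infty}$, the stated bound follows at once.

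For \eqref{eq:weighted-jackson-tail}, apply the same bound with $f=W_N-W$. Here $W$ is defined everywhere because the series converges uniformly, so $\J_cW$ makes sense pointwise for $x>0$. The uniform tail estimate \eqref{eq:potential-tail} gives $\|W-W_N\|_{L^\infty}\leq\|\phi\|_{L^\infty}a^{N+1}/(1-a)$. Alternatively, one can sum the increments from the first part over $n>N$; this yields the same geometric constant.

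The local estimate \eqref{eq:local-jackson-tail} follows by dividing by $x\geq r$ on $I=[r,R]$. Letting $N\to\infty$ then gives locally uniform convergence on $(0,\infty)$, because every compact subset lies in some interval of this form.

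There is no real obstacle here. The only point to check is that the sup over all $x>0$ is legitimate, and it is: the weighted quantity involves only $f(cx)$ and $f(x)$, both of which are controlled by the global $L^\infty$ bound.
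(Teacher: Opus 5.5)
Your proposal is correct and follows essentially the same route as the paper. Both use linearity of $\J_c$, the identity $x\,\J_cf(x)=[f(cx)-f(x)]/(c-1)$ applied to $f=a^N\phi(b^N\cdot)$ and to $f=W_N-W$ with the uniform tail bound \eqref{eq:potential-tail}, and then division by $x\geq r$ on $[r,R]$.
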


\begin{proof}
Since
$$
W_N(x)-W_{N-1}(x)
=
a^N\phi\left(b^Nx\right),
$$
linearity gives
$$
\J_cW_N(x)-\J_cW_{N-1}(x)
=
\frac{a^N}{(c-1)x}
\left[
\phi\left(cb^Nx\right)-\phi\left(b^Nx\right)
\right].
$$
The boundedness of $\phi$ immediately yields equation~\eqref{eq:weighted-jackson-increment}. For the truncation error,
$$
\J_cW_N(x)-\J_cW(x)
=
\frac{
[W_N-W](cx)-[W_N-W](x)
}{(c-1)x},
$$
so the uniform potential tail equation~\eqref{eq:potential-tail} gives equation~\eqref{eq:weighted-jackson-tail}. Dividing by $x\geq r$ on $I$ proves equation~\eqref{eq:local-jackson-tail}.
\end{proof}

The preceding estimates isolate the refinement mismatch. Under the classical assumptions,
$$
\left\|W_N'-W_{N-1}'\right\|_{L^2(0,1)}
=
(ab)^N\zeta_N
\asymp
(ab)^N,
$$
whereas, for fixed $c>1$ and $I=[r,R]\subset(0,\infty)$,
$$
\left\|\J_cW_N-\J_cW_{N-1}\right\|_{L^\infty(I)}
\leq
\frac{2\|\phi\|_{L^\infty}}{(c-1)r}a^N.
$$
For the target family in equation~\eqref{eq:rough-potentials}, the baseline $V$ is independent of $N$, so the same rates hold for adjacent exact and split multiplicative proposal fields after multiplication by $|\kappa|$.

Convergence at a fixed finite scale is not unique to multiplicative differences. For
$$
\Delta_\delta f(x)=\frac{f(x+\delta)-f(x)}{\delta},
\qquad \delta\neq0,
$$
the uniform tail bound gives
$$
\left\|\Delta_\delta W_N-\Delta_\delta W\right\|_{L^\infty(\R)}
\leq
\frac{2\|\phi\|_{L^\infty}}{|\delta|}
\frac{a^{N+1}}{1-a}.
$$
The matched Jackson quotient is therefore distinguished structurally rather than exclusively: \eqref{eq:scale-covariance} gives compatibility with multiplicative rescaling, and Proposition~\ref{prop:matched-closure} gives exact finite closure when $c=b^m$.

\subsection{Hölder regularity and the nonclassical finite-scale regime}
\label{subsec:holder-finite-scale}

The preceding convergence estimates keep the multiplicative ratio $c$ fixed. This is essential in the rough regime: if a limiting potential is only Hölder continuous, sending $c$ to one need not produce a finite classical force. Thus $c$ is treated as a finite structural scale, preferably matched to the intrinsic dilation $b$, rather than as a discretization parameter that must vanish. At the same time, Hölder continuity supplies the modulus needed to control finite compositions of proposal maps.

To make the first point precise, let $F$ be locally Hölder continuous with exponent $\alpha\in(0,1]$. On a compact set $K\subset(0,\infty)$, suppose
$$
|F(x)-F(y)|
\leq
C_K|x-y|^\alpha.
$$
Whenever $x$ and $cx$ belong to $K$,
$$
|\J_cF(x)|
\leq
C_K(c-1)^{\alpha-1}x^{\alpha-1}.
$$
For $\alpha<1$, the factor $(c-1)^{\alpha-1}$ diverges as $c\to1$. This does not contradict the fixed-scale construction: a Hölder function need not possess a classical derivative, and the present method does not rely on recovering one through the limit $c\to1$.

For a Lipschitz periodic profile $\phi$, the Weierstrass family has a uniform Hölder modulus for every exponent $\alpha\in(0,1]$ satisfying
$$
ab^\alpha<1.
$$
Equivalently, one may take
$$
0<\alpha<\min\left\{1,-\frac{\log a}{\log b}\right\},
$$
with the endpoint $\alpha=1$ also allowed when $ab<1$. The precise estimate is derived in Appendix~\ref{app:holder}. On compact intervals away from zero, the matched closure formula then transfers this modulus to $\J_bW$. Combined with the regularity of the smooth baseline force, this provides the continuity input used later for finite proposal compositions.

\subsection{Posterior convergence under uniform likelihood approximation}
\label{subsec:target-convergence}

The force estimates above are model specific, whereas the probability-level implication is general. The next result identifies the topology in which the targets converge; its role is to contrast posterior stability with the stronger sensitivity convergence required by gradient-based sampling.

\begin{theorem}[Posterior stability]
\label{thm:posterior-stability}
Let $(\Theta,\mathcal B,\mu_0)$ be a probability space, and let
$$
\Lcal_N,\Lcal:\Theta\longrightarrow\R
$$
be measurable. Suppose
$$
\|\Lcal_N-\Lcal\|_{L^\infty(\mu_0)}
\leq
\varepsilon_N
$$
and
$$
0<Z
=
\int_\Theta
\exp[-\Lcal(\theta)]\,\mu_0(d\theta)
<\infty.
$$
Define
$$
Z_N
=
\int_\Theta
\exp[-\Lcal_N(\theta)]\,\mu_0(d\theta)
$$
and
$$
\frac{d\pi_N}{d\mu_0}
=
Z_N^{-1}\exp[-\Lcal_N],
\qquad
\frac{d\pi}{d\mu_0}
=
Z^{-1}\exp[-\Lcal].
$$
Then $0<Z_N<\infty$ and
$$
\exp(-\varepsilon_N)
\leq
\frac{Z_N}{Z}
\leq
\exp(\varepsilon_N).
$$
Moreover,
\begin{align}
\|\pi_N-\pi\|_{\TV}
&\leq
\frac12
\left[
\exp(2\varepsilon_N)-1
\right],
\label{eq:general-tv-bound}
\\
 d_{\Hell}(\pi_N,\pi)
&\leq
\frac{1}{\sqrt{2}}
\left[
\exp(\varepsilon_N)-1
\right],
\label{eq:general-hellinger-bound}
\end{align}
where
$$
d_{\Hell}^2(\pi_N,\pi)
=
\frac12
\int_\Theta
\left[
\sqrt{\frac{d\pi_N}{d\mu_0}}
-
\sqrt{\frac{d\pi}{d\mu_0}}
\right]^2
\mu_0(d\theta).
$$
For every bounded measurable quantity of interest $Q$, one also has
\begin{equation}
\left|
\E_{\pi_N}[Q]-\E_\pi[Q]
\right|
\leq
\|Q\|_{L^\infty}
\left[
\exp(2\varepsilon_N)-1
\right].
\label{eq:qoi-convergence}
\end{equation}
\end{theorem}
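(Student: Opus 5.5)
The plan is to compare the densities pointwise through the likelihood ratio. Write $f_N=\exp[-\Lcal_N]$ and $f=\exp[-\Lcal]$. The hypothesis gives, $\mu_0$-a.e.,
$$
e^{-\varepsilon_N}f\leq f_N\leq e^{\varepsilon_N}f .
$$
Integrating against $\mu_0$ yields $e^{-\varepsilon_N}Z\leq Z_N\leq e^{\varepsilon_N}Z$. This proves $0<Z_N<\infty$ and the two-sided ratio bound.

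Dividing the pointwise bounds by the ratio bounds then gives, $\mu_0$-a.e.,
$$
e^{-2\varepsilon_N}\leq \frac{d\pi_N/d\mu_0}{d\pi/d\mu_0}=\frac{f_N}{f}\cdot\frac{Z}{Z_N}\leq e^{2\varepsilon_N}.
$$

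For the total-variation bound, use $\|\pi_N-\pi\|_{\TV}=\tfrac12\int|p_N-p|\,d\mu_0$, where $p_N,p$ are the densities. Write $|p_N-p|=p\,|p_N/p-1|$. The ratio lies in $[e^{-2\varepsilon_N},e^{2\varepsilon_N}]$, and $1-e^{-2\varepsilon_N}\leq e^{2\varepsilon_N}-1$, so $|p_N/p-1|\leq e^{2\varepsilon_N}-1$. Integrating against $\pi$ gives \eqref{eq:general-tv-bound}.

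The bounded quantity-of-interest estimate follows without passing through total variation:
$$
|\E_{\pi_N}Q-\E_\pi Q|\leq\|Q\|_\infty\int|p_N-p|\,d\mu_0\leq \|Q\|_\infty\left[e^{2\varepsilon_N}-1\right].
$$
This is \eqref{eq:qoi-convergence}, without the factor $\tfrac12$. Taking the supremum over $|Q|\le1$ shows that this route is consistent with the TV bound.

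The Hellinger bound is the step needing care, because the ratio bound only gives $|\sqrt{p_N/p}-1|\leq e^{\varepsilon_N}-1$. That yields $d_{\Hell}^2\leq\tfrac12(e^{\varepsilon_N}-1)^2$, which is exactly \eqref{eq:general-hellinger-bound}. The derivation is:
$$
d_{\Hell}^2=\tfrac12\int p\left(\sqrt{p_N/p}-1\right)^2d\mu_0\leq\tfrac12\left(e^{\varepsilon_N}-1\right)^2 .
$$
The only subtleties are these:
\begin{enumerate}[label=(\roman*)]
\item $\sqrt{p_N/p}\in[e^{-\varepsilon_N},e^{\varepsilon_N}]$, and we need $1-e^{-\varepsilon_N}\leq e^{\varepsilon_N}-1$, which holds by convexity of $\exp$.
\item $p>0$ everywhere because $\exp[-\Lcal]>0$, so dividing by $p$ is legitimate.
\end{enumerate}
Measurability is immediate from the hypotheses. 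The $L^\infty(\mu_0)$ assumption means every inequality holds only $\mu_0$-a.e., which suffices for all the integrals involved.
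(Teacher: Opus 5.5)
Your proposal is correct and follows the paper's proof essentially line for line. You use the a.e.\ two-sided bound on $\exp[-\Lcal_N]$, integrate to get $e^{-\varepsilon_N}\le Z_N/Z\le e^{\varepsilon_N}$, derive the density-ratio bound $e^{-2\varepsilon_N}\le d\pi_N/d\pi\le e^{2\varepsilon_N}$, integrate $|d\pi_N/d\pi-1|$ and $|\sqrt{d\pi_N/d\pi}-1|$ against $\pi$ for the total-variation and Hellinger bounds, and read off the quantity-of-interest estimate from the $L^1$ distance, just as the paper does (your explicit checks that $1-e^{-t}\le e^{t}-1$ and $p>0$ simply make explicit what the paper leaves implicit).
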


\begin{proof}
The uniform approximation gives
$$
-\varepsilon_N\leq\Lcal_N(\theta)-\Lcal(\theta)
\leq
\varepsilon_N
$$
for $\mu_0$-almost every $\theta$. Hence
$$
\exp(-\varepsilon_N)\exp[-\Lcal(\theta)]
\leq
\exp[-\Lcal_N(\theta)]
\leq
\exp(\varepsilon_N)\exp[-\Lcal(\theta)].
$$
Integrating with respect to $\mu_0$ yields $\exp(-\varepsilon_N)Z\leq Z_N\leq\exp(\varepsilon_N)Z$.
Therefore, $\exp(-2\varepsilon_N)\leq\frac{d\pi_N}{d\pi}\leq\exp(2\varepsilon_N)$.
It follows that
$$
\left|
\frac{d\pi_N}{d\pi}-1
\right|
\leq
\exp(2\varepsilon_N)-1.
$$
Integration against $\pi$ proves equation~\eqref{eq:general-tv-bound}. Also,
$$
\exp(-\varepsilon_N)
\leq
\sqrt{\frac{d\pi_N}{d\pi}}
\leq
\exp(\varepsilon_N),
$$
so
$$
\left|
\sqrt{\frac{d\pi_N}{d\pi}}-1
\right|
\leq
\exp(\varepsilon_N)-1.
$$
The Hellinger estimate follows by integration against $\pi$. Finally,
$$
\left|
\E_{\pi_N}[Q]-\E_\pi[Q]
\right|
\leq
\|Q\|_{L^\infty}
\|\pi_N-\pi\|_{L^1},
$$
which gives equation~\eqref{eq:qoi-convergence}.
\end{proof}

\begin{proposition}[Gaussian forward-map criterion]
\label{prop:gaussian-forward-criterion}
Let
$$
\mathcal G_N,\mathcal G:\Theta\longrightarrow\R^m
$$
be measurable forward maps, let $y\in\R^m$, and let $\Gamma$ be symmetric positive definite. Define
$$
\Lcal_N(\theta)
=
\frac12
\left|
\Gamma^{-1/2}
\left[
\mathcal G_N(\theta)-y
\right]
\right|^2,
$$
with $\Lcal$ defined analogously using $\mathcal G$. Suppose
$$
\|\mathcal G_N-\mathcal G\|_{L^\infty(\mu_0)}
\leq
\delta_N
$$
and
$$
\left\|
\max\left\{
|\mathcal G_N(\cdot)-y|,
|\mathcal G(\cdot)-y|
\right\}
\right\|_{L^\infty(\mu_0)}
\leq
M.
$$
Then
$$
\|\Lcal_N-\Lcal\|_{L^\infty(\mu_0)}
\leq
M\|\Gamma^{-1}\|_2\delta_N.
$$
Consequently, all conclusions of Theorem~\ref{thm:posterior-stability} hold with
$$
\varepsilon_N
=
M\|\Gamma^{-1}\|_2\delta_N.
$$
\end{proposition}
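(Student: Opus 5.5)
The plan is to reduce everything to a pointwise algebraic identity for the difference of two quadratic forms and then invoke Theorem~\ref{thm:posterior-stability}. Fix $\theta$ outside a $\mu_0$-null set on which both $L^\infty(\mu_0)$ bounds hold. Write $u=\mathcal G_N(\theta)-y$ and $v=\mathcal G(\theta)-y$. Using the symmetric bilinear form $\langle p,q\rangle_\Gamma=p^\top\Gamma^{-1}q$, the polarization identity gives
\begin{equation*}
\Lcal_N(\theta)-\Lcal(\theta)=\tfrac12\left(\langle u,u\rangle_\Gamma-\langle v,v\rangle_\Gamma\right)=\tfrac12\langle u-v,\,u+v\rangle_\Gamma .
\end{equation*}
Since $u-v=\mathcal G_N(\theta)-\mathcal G(\theta)$, this is the natural starting point.

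Next I bound the bilinear form by Cauchy--Schwarz together with the operator norm: $|\langle p,q\rangle_\Gamma|\le\|\Gamma^{-1}\|_2|p||q|$. For the first factor, $|u-v|\le\delta_N$. For the second, the triangle inequality gives $|u+v|\le|u|+|v|\le 2\max\{|u|,|v|\}\le 2M$. The factor $2$ cancels the $\tfrac12$, which yields $|\Lcal_N(\theta)-\Lcal(\theta)|\le M\|\Gamma^{-1}\|_2\delta_N$ for $\mu_0$-a.e.\ $\theta$. This is precisely the claimed $L^\infty(\mu_0)$ bound.

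For the consequence, I will verify the hypotheses of Theorem~\ref{thm:posterior-stability} with $\varepsilon_N=M\|\Gamma^{-1}\|_2\delta_N$. Measurability of $\Lcal_N$ and $\Lcal$ follows because each is a continuous function of a measurable map. The remaining hypothesis is $0<Z<\infty$. Since $\Lcal\ge0$, we have $Z\le1$. Moreover $\Lcal\le\tfrac12\|\Gamma^{-1}\|_2M^2$ $\mu_0$-a.e., so $Z\ge\exp(-\tfrac12\|\Gamma^{-1}\|_2M^2)>0$.

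I do not expect a real obstacle. The only point needing care is the bookkeeping: the bilinear splitting must be paired with the factor $\tfrac12$ so that the constant comes out as $M$ rather than $2M$, and the null sets from the two essential-supremum bounds must be combined into one.
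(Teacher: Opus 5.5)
Your proof is correct and is essentially the paper's argument: the paper also writes the difference of quadratic forms as $\tfrac12\bigl(r_N^\top\Gamma^{-1}r_N-r^\top\Gamma^{-1}r\bigr)$, bounds it by $\tfrac12\|\Gamma^{-1}\|_2\,|r_N-r|\,(|r_N|+|r|)$ (your polarization identity makes explicit the use of the symmetry of $\Gamma^{-1}$ behind this step), and concludes $\le M\|\Gamma^{-1}\|_2\delta_N$. Your explicit check that $0<Z<\infty$, using $0\le\Lcal\le\tfrac12\|\Gamma^{-1}\|_2M^2$ $\mu_0$-a.e., is a useful addition, since the paper invokes Theorem~\ref{thm:posterior-stability} without verifying that hypothesis.
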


\begin{proof}
Set
$$
r_N(\theta)=\mathcal G_N(\theta)-y,
\qquad
r(\theta)=\mathcal G(\theta)-y.
$$
Then
\begin{align*}
|\Lcal_N(\theta)-\Lcal(\theta)|
&=
\frac12
\left|
r_N(\theta)^\top\Gamma^{-1}r_N(\theta)
-
r(\theta)^\top\Gamma^{-1}r(\theta)
\right|
\\
&\leq
\frac12
\|\Gamma^{-1}\|_2
|r_N(\theta)-r(\theta)|
\left[
|r_N(\theta)|+|r(\theta)|
\right]
\\
&\leq
M\|\Gamma^{-1}\|_2\delta_N.
\end{align*}
The posterior bounds follow from Theorem~\ref{thm:posterior-stability}.
\end{proof}

For the rough family in Section~\ref{subsec:rough-target-family}, take
$$
\mu_V(dx)
=
Z_V^{-1}\exp[-V(x)]\,dx
$$
as the reference probability measure and set
$$
\Lcal_N(x)=\kappa W_N(x),
\qquad
\Lcal(x)=\kappa W(x).
$$
By equation~\eqref{eq:potential-tail},
$$
\|\Lcal_N-\Lcal\|_{L^\infty(\mu_V)}
\leq
\eta_N,
\qquad
\eta_N
=
|\kappa|
\frac{\|\phi\|_{L^\infty}}{1-a}
a^{N+1}.
$$
Therefore,
$$
\|\pi_N-\pi\|_{\TV}
\leq
\frac12
\left[
\exp(2\eta_N)-1
\right]
=
O(a^N),
$$
and the Hellinger distance and bounded quantity-of-interest errors have the same geometric order. No differentiability is required. Combined with Theorem~\ref{thm:classical-force-growth}, this yields a sequence of posterior targets converging in standard probability metrics even though their exact classical forces are not Cauchy in $L^2(0,1)$.

The same theorem also closes the probability-level part of Proposition~\ref{prop:wiggly-separation}. Let $\mu_V(dx)=Z_V^{-1}e^{-V(x)}dx$ and $\Lcal_n=R_n$. Then
$$
\|\Lcal_n\|_{L^\infty(\mu_V)}
\leq
r_n\|\psi\|_{L^\infty},
$$
so, writing $\pi_n$ for the target with potential $V+R_n$ and $\pi_V$ for the target with potential $V$,
\begin{align*}
\|\pi_n-\pi_V\|_{\TV}
&\leq
\frac12
\left[
\exp\left(2r_n\|\psi\|_{L^\infty}\right)-1
\right],
\\
d_{\Hell}(\pi_n,\pi_V)
&\leq
\frac{1}{\sqrt2}
\left[
\exp\left(r_n\|\psi\|_{L^\infty}\right)-1
\right].
\end{align*}
Hence $r_n=\varepsilon_n^\alpha$, $0<\alpha<1$, yields an explicit sequence for which the posterior error is $O(\varepsilon_n^\alpha)$ while the exact-force discrepancy is of order $\varepsilon_n^{\alpha-1}$.

\section{Resolution-stable Metropolized finite-scale dynamics}
\label{sec:resolution-kernels}

This section lifts field convergence to deterministic proposals and Metropolis kernels. Exactness follows from the shear structure and the use of the true finite-resolution Hamiltonian in the acceptance step; it is independent of how the proposal field is constructed. Local field convergence is then propagated through a fixed number of leapfrog stages and contrasted with the first classical half-kick.

\subsection{Measurable shear proposals and exactness}

We first isolate the measure-theoretic mechanism behind Metropolis correction. The key point is that differentiability of the proposal force is unnecessary: kick and drift maps are triangular translations, so their volume preservation follows directly from translation invariance.

For this abstract exactness argument, let
$$
U_N:\R^d\longrightarrow(-\infty,\infty]
$$
denote a measurable finite-resolution potential. This allows hard support
constraints to be represented by $U_N=+\infty$ outside the admissible
parameter domain. When $d=1$, the rough family in
equation~\eqref{eq:rough-potentials} is the principal example. Let the phase
space be
$$
\Zcal=\R^d\times\R^d,
\qquad
z=(x,p),
$$
and let $K:\R^d\to\R$ be a measurable kinetic energy. Define
$$
\Hcal_N(x,p)
=
U_N(x)+K(p),
\qquad
w_N(z)=\exp[-\Hcal_N(z)],
$$
with the convention $\exp(-\infty)=0$, and assume
$$
0<
\widetilde Z_N
=
\int_{\R^{2d}}w_N(z)\,dz
<\infty.
$$
The extended target is
$$
\widetilde\pi_N(dz)
=
\frac{w_N(z)}{\widetilde Z_N}\,dz.
$$

Let $G_N:\R^d\to\R^d$ be a measurable proposal field and $v:\R^d\to\R^d$ a measurable momentum velocity. The construction applies to exact-gradient, additive finite-scale, and multiplicative finite-scale fields. Define
$$
\Pcal_{N,h}(x,p)
=
\left(x,p-hG_N(x)\right)
$$
and
$$
\Qcal_h(x,p)
=
\left(x+hv(p),p\right).
$$
The one-step leapfrog map and the momentum reversal are
$$
\Phi_{N,h}
=
\Pcal_{N,h/2}
\circ
\Qcal_h
\circ
\Pcal_{N,h/2},
\qquad
\Rcal(x,p)
=
(x,-p),
$$
and the $L$-step deterministic proposal is
$$
\Psi_N
=
\Rcal\circ\Phi_{N,h}^{L}.
$$

The kick and drift are measurable triangular translations. If $G_N$ and $v$ are finite almost everywhere, their inverses are
$$
\Pcal_{N,h}^{-1}
=
\Pcal_{N,-h},
\qquad
\Qcal_h^{-1}
=
\Qcal_{-h}.
$$
They also preserve Lebesgue measure. For example, Tonelli's theorem and translation invariance give, for every nonnegative measurable $f$,
\begin{align*}
\int_{\R^{2d}}
f\left(\Pcal_{N,h}(x,p)\right)\,dp\,dx
&=
\int_{\R^d}\int_{\R^d}
f\left(x,p-hG_N(x)\right)\,dp\,dx
\\
&=
\int_{\R^d}\int_{\R^d}
f(x,\widetilde p)\,d\widetilde p\,dx.
\end{align*}
The drift is handled in the same way. This elementary shear structure is the only volume-preservation input needed below.

Assume

\begin{equation}
K(-p)
=
K(p),
\qquad
v(-p)
=
-v(p).
\label{eq:parity}
\end{equation}
Then
$$
\Rcal\Phi_{N,h}\Rcal
=
\Phi_{N,h}^{-1},
$$
and $\Psi_N$ is a measure-preserving involution.
For $w_N(z)>0$, define
\begin{equation}
\alpha_N(z)
=
1\wedge
\frac{w_N(\Psi_Nz)}{w_N(z)}.
\label{eq:metropolis-weight}
\end{equation}
On the zero-density set $\{w_N=0\}$, the value of $\alpha_N$ may be chosen
arbitrarily; we set it equal to one. For finite Hamiltonian values,
equation~\eqref{eq:metropolis-weight} is the usual expression
$$
1\wedge
\exp\left[
-\Hcal_N(\Psi_Nz)+\Hcal_N(z)
\right].
$$

\begin{theorem}[Exactness with measurable proposal fields]
\label{thm:rough-exactness}
Suppose $0<\widetilde Z_N<\infty$, $G_N$ and $v$ are measurable and finite almost everywhere, and equation~\eqref{eq:parity} holds. Then the deterministic Metropolis kernel
\begin{equation*}
M_N(z,dz')=\alpha_N(z)\delta_{\Psi_Nz}(dz')+
\left[1-\alpha_N(z)\right]\delta_z(dz')
\end{equation*}
is reversible with respect to $\widetilde\pi_N$. No differentiability of $U_N$, $G_N$, $K$, or $v$ is required.
\end{theorem}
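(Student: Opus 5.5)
The plan is to check detailed balance directly. For bounded measurable $f,g$ on $\Zcal$ we want
$$
\int f(z)\,g(z')\,w_N(z)\,M_N(z,dz')\,dz=\int g(z)\,f(z')\,w_N(z)\,M_N(z,dz')\,dz .
$$
The rejection part $[1-\alpha_N(z)]\delta_z$ contributes $\int f g\,(1-\alpha_N)w_N\,dz$. This term is symmetric in $(f,g)$, so it suffices to prove
$$
\int f(z)\,g(\Psi_Nz)\,\alpha_N(z)w_N(z)\,dz=\int g(z)\,f(\Psi_Nz)\,\alpha_N(z)w_N(z)\,dz .
$$

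The first step is the structural one: show that $\Psi_N$ is a Lebesgue-measure-preserving involution, defined almost everywhere. Each kick $\Pcal_{N,h/2}$ and the drift $\Qcal_h$ is a measurable bijection off a null set, with inverses $\Pcal_{N,-h/2}$ and $\Qcal_{-h}$. The Tonelli/translation computation displayed before the theorem shows that each preserves Lebesgue measure, so compositions do as well. Reversal gives $\Rcal\Pcal_{N,s}\Rcal=\Pcal_{N,-s}$. Using $v(-p)=-v(p)$ from \eqref{eq:parity}, we also get $\Rcal\Qcal_h\Rcal=\Qcal_{-h}$. Because $\Phi_{N,h}$ is palindromic, $\Rcal\Phi_{N,h}\Rcal=\Pcal_{N,-h/2}\Qcal_{-h}\Pcal_{N,-h/2}=\Phi_{N,h}^{-1}$, and iterating gives $\Rcal\Phi_{N,h}^L\Rcal=\Phi_{N,h}^{-L}$. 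Since $\Rcal^2=\mathrm{id}$, it follows that $\Psi_N^2=\Rcal\Phi^L_{N,h}\Rcal\Phi^L_{N,h}=\mathrm{id}$.

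The main obstacle is the null sets where $G_N$ or $v$ is infinite. The exceptional set of each map has measure zero, and the preimages of null sets under measure-preserving maps are null. Hence we can restrict to a full-measure set on which every composition is a genuine bijection, and on which it is invariant under $\Psi_N$. We then redefine $\Psi_N$ as the identity off this set. I would handle this carefully but briefly.

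The second step is the key pointwise identity $\alpha_N(z)w_N(z)=\min\{w_N(z),w_N(\Psi_Nz)\}$. When $w_N(z)>0$ this is immediate from \eqref{eq:metropolis-weight}. When $w_N(z)=0$ both sides vanish, whatever value is chosen for $\alpha_N$ there. Note that $\infty$ values of $U_N$ cause no trouble because of the convention $e^{-\infty}=0$. Write $m(z)=\min\{w_N(z),w_N(\Psi_Nz)\}$. Since $\Psi_N$ is an involution, $m(\Psi_Nz)=m(z)$. Substituting $z=\Psi_N u$ and using measure preservation,
$$
\int f(z)g(\Psi_Nz)m(z)\,dz=\int f(\Psi_Nu)g(u)m(\Psi_Nu)\,du=\int g(u)f(\Psi_Nu)m(u)\,du ,
$$
which is the required symmetry. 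Dividing by $\widetilde Z_N\in(0,\infty)$ gives reversibility with respect to $\widetilde\pi_N$.

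Finally, observe that the argument used only measurability, the shear (triangular translation) structure, and parity. Differentiability of $U_N$, $G_N$, $K$, or $v$ never entered, which gives the last sentence of the theorem.
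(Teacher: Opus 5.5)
Your proof is correct and follows the same route as the paper. The paper's proof likewise rests on three points: $\Psi_N$ is a Lebesgue-measure-preserving involution (from the shear structure and parity), the identity $w_N(z)\alpha_N(z)=\min\{w_N(z),w_N(\Psi_Nz)\}$ is invariant under $z\leftrightarrow\Psi_Nz$, and the rejection part is diagonal. You also make explicit what the paper leaves implicit: the conjugation identities $\Rcal\Pcal_{N,s}\Rcal=\Pcal_{N,-s}$ and $\Rcal\Qcal_h\Rcal=\Qcal_{-h}$, the null sets where $G_N$ or $v$ is infinite, and the change of variables $z=\Psi_Nu$.
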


\begin{proof}
By the preceding shear calculation, $\Psi_N$ is measure preserving, and the
parity condition makes it an involution. By
equation~\eqref{eq:metropolis-weight},
$$
w_N(z)\alpha_N(z)
=
\min\left\{
w_N(z),w_N(\Psi_Nz)
\right\},
$$
including points at which one of the two weights vanishes. This expression is
invariant under $z\leftrightarrow\Psi_Nz$. The accepted flow is symmetric,
and the rejected part is diagonal.
\end{proof}
Momentum refreshment followed by $M_N$ gives the corresponding position-space Metropolized Hamiltonian kernel. Since refreshment preserves the kinetic factor, the resulting chain has position marginal proportional to $\exp[-U_N]$; for the one-dimensional family in Section~\ref{subsec:rough-target-family}, this marginal is exactly $\pi_N$.

\subsection{Finite-scale split proposal fields}

Having established exactness for an arbitrary measurable field, we return to the one-dimensional target family of Section~\ref{subsec:rough-target-family}. When the baseline potential $V$ is differentiable, the numerical comparisons use the split fields
$$
G_N^{\mathrm{add}}(x)
=
V'(x)+\kappa\Delta_\delta W_N(x),
$$
$$
G_N^{(c)}(x)
=
V'(x)+\kappa\J_cW_N(x),
\qquad c>1,
$$
where $\delta\neq0$ is fixed. The choice $c=b$ gives the matched Jackson field, while $c\neq b^m$ gives a multiplicative but nonresonant field. A full multiplicative field $\J_cU_N$ could also be used, but the split form retains the exact derivative of the smooth baseline and regularizes only the rough component.

All these choices produce exact Metropolis kernels by Theorem~\ref{thm:rough-exactness}. Exactness depends on the triangular shear structure and on evaluating the true finite-resolution Hamiltonian in the acceptance probability; it does not require the proposal field to equal $U_N'$. Moreover, for every fixed $\delta\neq0$ and $c>1$, the uniform convergence of $W_N$ implies local uniform convergence of the corresponding finite-scale fields. We write a generic convergent field as
$$
G_N\longrightarrow G.
$$
When $G$ is continuous, the limiting kick, leapfrog map, and involutive proposal are
$$
\Pcal_h(x,p)
=
\left(x,p-hG(x)\right),
$$
$$
\Phi_h
=
\Pcal_{h/2}\circ\Qcal_h\circ\Pcal_{h/2},
$$
and
$$
\Psi
=
\Rcal\circ\Phi_h^L.
$$
The convergence results below therefore apply to any fixed additive or multiplicative quotient. Scale matching enters through the covariance and closure properties proved in equation~\eqref{eq:scale-covariance} and Proposition~\ref{prop:matched-closure}, not through exactness or convergence alone.
The motivating family in Section~\ref{subsec:wiggly-generalization} shows why a nonzero-scale proposal field is needed, but it carries no distinguished multiplicative ratio. The algorithmic emphasis below is therefore on the nested Weierstrass setting, where the model itself supplies the dilation scale $b$ and the matched Jackson field inherits the covariance and closure properties established in equation~\eqref{eq:scale-covariance} and Proposition~\ref{prop:matched-closure}.

\subsection{Convergence of proposal maps and acceptance probabilities}

Local field convergence yields a finite-trajectory limit provided that all intermediate stages remain in one common compact set.
\begin{definition}[Admissible compact set]
\label{def:admissible-compact}
Fix $h>0$ and $L\in\N$. A compact set $C\subset\Zcal$
is called admissible for the sequence $\left\{\Phi_{N, h}\right\}_{N\geq0}$
and the limiting map $\Phi_h$ if there exist a compact set $C_\ast\subset\Zcal$
and an integer $N_0$ such that all full-step and intermediate-stage states generated from $C$ during the first $L$ leapfrog steps remain in $C_\ast$, uniformly for $N\geq N_0$ and for the limiting dynamics. More precisely, for every $N\geq N_0$
and every $\ell=0, \ldots, L-1$,
we require
\begin{align*}
&\Phi_{N,h}^{\ell}(C)\subset C_\ast,\\
&\Pcal_{N,h/2}\left(\Phi_{N,h}^{\ell}(C)\right)\subset C_\ast,\\
&\Qcal_h\circ\Pcal_{N,h/2}\left(\Phi_{N,h}^{\ell}(C)\right)\subset C_\ast,
\end{align*}
together with the corresponding conditions for the limiting maps,
\begin{align*}
&\Phi_h^{\ell}(C)\subset C_\ast,
\\
&\Pcal_{h/2}\left(
\Phi_h^{\ell}(C)
\right)\subset C_\ast,
\\
&\Qcal_h\circ
\Pcal_{h/2}
\left(
\Phi_h^{\ell}(C)
\right)
\subset C_\ast.
\end{align*}
We also require the terminal full-step states to satisfy
\begin{equation*}
\Phi_{N,h}^{L}(C)
\cup
\Phi_h^{L}(C)
\subset
C_\ast,
\qquad
N\geq N_0.
\end{equation*}
Finally, we require
$$
\Rcal(C_\ast)=C_\ast.
$$
\end{definition}
This is a finite-time compact-containment condition. It guarantees that the force convergence and the uniform-continuity estimates used below are applied on one common compact set containing every intermediate stage of the finite-resolution and limiting leapfrog trajectories. If the position variable is restricted to the positive orthant, admissibility additionally requires that the position projection of $C_\ast$ be compactly contained in that orthant.

In particular, admissibility excludes finite-time escape. On the positive half-line it also keeps trajectories away from the coordinate singularity at $x=0$; a log-coordinate implementation provides an alternative way to enforce positivity.

\begin{theorem}[Finite-length proposal convergence]
\label{thm:proposal-convergence}
Let $G_N\to G$ locally uniformly, and assume $G$ and $v$ are continuous. Fix $h>0$, $L\in\N$, and an admissible compact set $C$. Then
\begin{equation}
\sup_{z\in C}
\left|
\Phi_{N,h}^{L}(z)-\Phi_h^{L}(z)
\right|
\longrightarrow
0,
\label{eq:proposal-convergence}
\end{equation}
and hence
\begin{equation}
\sup_{z\in C}
\left|
\Psi_N(z)-\Psi(z)
\right|
\longrightarrow
0.
\label{eq:involution-convergence}
\end{equation}
\end{theorem}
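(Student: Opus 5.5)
The plan is a stage-by-stage induction over the $3L$ elementary maps (half-kick, drift, half-kick) that compose $\Phi_{N,h}^L$, carried out on the common compact set $C_\ast$ supplied by Definition~\ref{def:admissible-compact}. Write the finite-resolution and limiting trajectories as interleaved sequences of stage states $z_N^{(k)}(z)$ and $z^{(k)}(z)$, $k=0,\dots,3L$, starting from $z_N^{(0)}=z^{(0)}=z\in C$. Admissibility guarantees that every such state lies in $C_\ast$ for $N\ge N_0$. The induction hypothesis at stage $k$ is
$$
e_N^{(k)}=\sup_{z\in C}\left|z_N^{(k)}(z)-z^{(k)}(z)\right|\longrightarrow0 .
$$
The base case $k=0$ is trivial.

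\textbf{Kick stage.} Write $z_N^{(k)}=(x_N,p_N)$ and $z^{(k)}=(x,p)$. Then
$$
\left|\Pcal_{N,h/2}(x_N,p_N)-\Pcal_{h/2}(x,p)\right|
\le |x_N-x|+|p_N-p|+\tfrac h2\left|G_N(x_N)-G(x_N)\right|+\tfrac h2\left|G(x_N)-G(x)\right|.
$$
The third term is bounded by $\tfrac h2\sup_{\pi_x C_\ast}|G_N-G|$, which tends to $0$ because $G_N\to G$ locally uniformly and the position projection $\pi_x C_\ast$ is compact; in the half-line setting it is compactly contained in $(0,\infty)$. The fourth term is bounded by $\tfrac h2\,\omega_G(e_N^{(k)})$, where $\omega_G$ is the modulus of uniform continuity of $G$ on $\pi_x C_\ast$. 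This modulus exists because $G$ is continuous and $\pi_x C_\ast$ is compact. Both the old and the new point lie in $C_\ast$, so the modulus applies. Hence $e_N^{(k+1)}\to0$.

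\textbf{Drift stage.} The same argument works, with no $N$-dependence in the map:
$$
|x_N+hv(p_N)-x-hv(p)|\le e_N^{(k)}+h\,\omega_v(e_N^{(k)}),
$$
where $\omega_v$ is the modulus of uniform continuity of the continuous field $v$ on the compact momentum projection of $C_\ast$.

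After $3L$ stages this gives \eqref{eq:proposal-convergence}. Since $\Rcal$ is an isometry of $\Zcal$, we have $|\Rcal a-\Rcal b|=|a-b|$, so \eqref{eq:involution-convergence} follows immediately.

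The main obstacle is the term $G(x_N)-G(x)$. It forces continuity to be placed on the limit field $G$ rather than on $G_N$. The individual $G_N$ may be discontinuous, since they are only required to be measurable, so the error must always be split as "field error at the finite trajectory" plus "limit-field continuity between the two trajectories", and never the other way round. The other point needing care is the bookkeeping that each stage point, of both trajectories, belongs to $C_\ast$. This is exactly what the intermediate-stage conditions of Definition~\ref{def:admissible-compact} supply. Without them, the local uniform convergence and the moduli $\omega_G,\omega_v$ could not be applied on a single compact set, uniformly in $z\in C$ and in $N\ge N_0$.
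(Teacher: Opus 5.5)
Your proof is correct and follows the same route as the paper: induction over the fixed number of leapfrog stages on the common compact set $C_\ast$, splitting each kick error into the field error $G_N-G$ at the finite-resolution point (controlled by local uniform convergence) plus the uniform continuity of the limiting $G$ between the two trajectories (and of $v$ for the drift), followed by the isometry of $\Rcal$. Your stage-by-stage bookkeeping over the $3L$ elementary maps is a more explicit version of the paper's full-step induction. It also has the small advantage that every field evaluation occurs at a stage point that admissibility places in $C_\ast$.
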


\begin{proof}
On the position projection of $C_\ast$, local uniform convergence of $G_N$ gives uniform convergence of the finite-resolution kicks to the limiting kick. The drift is independent of $N$ and is uniformly continuous on the relevant momentum projection. The admissibility condition ensures that these estimates can be applied at every intermediate stage of the first $L$ steps. It follows that one full leapfrog map converges uniformly on each stage set. Induction over the fixed number of steps, using uniform continuity of the limiting kick and drift on $C_\ast$, proves equation~\eqref{eq:proposal-convergence}. The momentum flip is an isometry, which gives equation~\eqref{eq:involution-convergence}.
\end{proof}

The preceding theorem is qualitative and requires no Lipschitz regularity of the limiting field. We next combine proposal convergence with local uniform convergence of the potentials to control the Hamiltonian defect and the Metropolis kernel.

\begin{theorem}[Acceptance and Metropolis-kernel convergence]
\label{thm:kernel-convergence}
Assume the hypotheses of Theorem~\ref{thm:proposal-convergence}. Suppose also that
$$
U_N
\longrightarrow
U
$$
locally uniformly and that $U$ and $K$ are continuous. Define
$$
\Hcal(x,p)
=
U(x)+K(p),
$$
and let $\alpha$ and $M$ be the limiting acceptance function and deterministic Metropolis kernel obtained from $\Hcal$ and $\Psi$. Then on every admissible compact set $C$,

\begin{equation}
\sup_{z\in C}
\left|
\alpha_N(z)-\alpha(z)
\right|
\longrightarrow
0.
\label{eq:acceptance-convergence}
\end{equation}

For every bounded Lipschitz function $f$,
\begin{equation*}
\sup_{z\in C}
\left|
M_Nf(z)-Mf(z)
\right|
\longrightarrow
0.
\end{equation*}
Moreover, if
$$
d_{\rm BL}(\nu_1,\nu_2)
=
\sup_{\|f\|_\infty+\operatorname{Lip}(f)\leq1}
\left|
\int f\,d\nu_1-\int f\,d\nu_2
\right|,
$$
then
$$
\sup_{z\in C}
d_{\rm BL}\bigl(M_N(z,\cdot),M(z,\cdot)\bigr)
\longrightarrow0.
$$
\end{theorem}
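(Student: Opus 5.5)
The plan is to reduce everything to uniform convergence of the Hamiltonian at the two endpoints of the proposal, and then pass to kernels through the explicit two-atom form of $M_N(z,\cdot)$.

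\emph{Step 1 (Hamiltonians on $C_\ast$).} Local uniform convergence $U_N\to U$ gives $\sup_{C_\ast}|\Hcal_N-\Hcal|\to0$. By continuity of $U$ and $K$, $\Hcal$ is uniformly continuous and bounded on the compact set $C_\ast$; consequently the $\Hcal_N$ are uniformly bounded there for $N$ large.

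\emph{Step 2 (acceptance).} For $z\in C$, admissibility gives $\Phi_{N,h}^L(z),\Phi_h^L(z)\in C_\ast$. Since $\Rcal(C_\ast)=C_\ast$, both $\Psi_N z$ and $\Psi z$ lie in $C_\ast$. Write
$$
\Delta_N(z)=\Hcal_N(z)-\Hcal_N(\Psi_Nz),\qquad \Delta(z)=\Hcal(z)-\Hcal(\Psi z).
$$
Then
$$
|\Delta_N(z)-\Delta(z)|\le |\Hcal_N(z)-\Hcal(z)|+|\Hcal_N(\Psi_Nz)-\Hcal(\Psi_Nz)|+|\Hcal(\Psi_Nz)-\Hcal(\Psi z)|.
$$
The first two terms are bounded by $\sup_{C_\ast}|\Hcal_N-\Hcal|$. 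The third tends to zero uniformly in $z\in C$ by uniform continuity of $\Hcal$ on $C_\ast$ together with Theorem~\ref{thm:proposal-convergence}, equation~\eqref{eq:involution-convergence}. Because $t\mapsto 1\wedge e^{t}$ is $1$-Lipschitz on $\R$, this yields \eqref{eq:acceptance-convergence}. All Hamiltonian values are finite on $C_\ast$ for large $N$ by Step 1, so the convention on $\{w_N=0\}$ never enters.

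\emph{Step 3 (kernels).} For bounded Lipschitz $f$,
$$
M_Nf(z)-Mf(z)=(\alpha_N-\alpha)(z)\,[f(\Psi_Nz)-f(z)]+\alpha(z)\,[f(\Psi_Nz)-f(\Psi z)].
$$
Hence
$$
|M_Nf(z)-Mf(z)|\le 2\|f\|_\infty|\alpha_N(z)-\alpha(z)|+\operatorname{Lip}(f)\,|\Psi_Nz-\Psi z|.
$$
Both terms vanish uniformly on $C$ by Step 2 and \eqref{eq:involution-convergence}. The bound depends on $f$ only through $\|f\|_\infty$ and $\operatorname{Lip}(f)$, so taking the supremum over $\|f\|_\infty+\operatorname{Lip}(f)\le1$ gives
$$
\sup_{z\in C} d_{\rm BL}\bigl(M_N(z,\cdot),M(z,\cdot)\bigr)\le 2\sup_C|\alpha_N-\alpha|+\sup_C|\Psi_N-\Psi|\to0.
$$

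The only delicate point is Step 2. There one must ensure that $\Psi_N z$ and $\Psi z$ both remain in the common compact set where the potentials converge and $\Hcal$ is uniformly continuous. That is exactly what the terminal-state condition and the reversal invariance $\Rcal(C_\ast)=C_\ast$ in Definition~\ref{def:admissible-compact} provide; the remaining steps are elementary estimates.
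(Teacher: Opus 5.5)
Your proof is correct and follows the paper's argument. Both proofs use uniform convergence of $\Hcal_N$ at $z$ and at $\Psi_N z$ on the common compact set $C_\ast$, the Lipschitz property of $t\mapsto 1\wedge e^{t}$, and a two-term splitting of $M_Nf-Mf$; you weight the Lipschitz term by $\alpha$ instead of $\alpha_N$, which makes no difference. You are somewhat more explicit than the paper: you verify $\Psi_N z,\Psi z\in C_\ast$ through the terminal condition and $\Rcal(C_\ast)=C_\ast$, and you give the explicit bound $2\sup_C|\alpha_N-\alpha|+\sup_C|\Psi_N-\Psi|$ for the bounded-Lipschitz distance.
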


\begin{proof}
By Theorem~\ref{thm:proposal-convergence}, $\Psi_N\to\Psi$ uniformly on $C$. The images lie in the common compact set $C_\ast$. Local uniform convergence of $U_N$, together with continuity of $U$ and $K$, gives

$$
\sup_{z\in C}
\left|
\Hcal_N(\Psi_Nz)-\Hcal(\Psi z)
\right|
\longrightarrow
0
$$

and

$$
\sup_{z\in C}
\left|
\Hcal_N(z)-\Hcal(z)
\right|
\longrightarrow
0.
$$

The map $r\mapsto1\wedge e^{-r}$ is globally Lipschitz, so equation~\eqref{eq:acceptance-convergence} follows. Finally,

\begin{align*}
M_Nf(z)-Mf(z)
&=
\alpha_N(z)
\left[
f(\Psi_Nz)-f(\Psi z)
\right]
\\
&\quad+
\left[
\alpha_N(z)-\alpha(z)
\right]
\left[
f(\Psi z)-f(z)
\right],
\end{align*}

and the bounded-Lipschitz convergence follows.
\end{proof}

Because the limiting acceptance function is continuous and strictly positive on an admissible compact set $C$, uniform convergence implies the existence of $N_C$ and $\alpha_C>0$ such that
$$
\inf_{N\geq N_C}\inf_{z\in C}\alpha_N(z)\geq\alpha_C.
$$
If, in addition, $\inf_{N\geq N_0}\widetilde\pi_N(C)\geq\beta>0$, then
$$
\liminf_{N\to\infty}\int\alpha_N(z)\,\widetilde\pi_N(dz)
\geq\beta\alpha_C>0.
$$
This is a local refinement-stability statement, not a global mixing result.

\paragraph{Scope of the kernel limit.}
The results concern a fixed number of proposal steps on admissible compact sets. They do not imply a resolution-uniform spectral gap or mixing time, which would require additional global contraction, drift--minorization, or spectral estimates; see \cite{RudolfSchweizer2018}.

\subsection{Classical kick degeneration and the necessary step-size scale}

We close the section by locating the classical obstruction at the earliest stage of the integrator: the first half-kick. Suppose $V$ is differentiable and define

$$
U_N'(x)
=
V'(x)+\kappa W_N'(x).
$$

For a step size $h>0$, the first classical half-kick at resolution $N$ is

\begin{equation*}
\Kcal_{N,h}^{\mathrm{cl}}(x,p)
=
\left(
 x,
 p-\frac{h}{2}U_N'(x)
\right).
\end{equation*}

Let $\gamma$ be any probability measure on momentum space. Equip $(0,1)\times\R$ with $dx\otimes\gamma(dp)$.

\begin{theorem}[Nonconvergence of the classical kick]
\label{thm:classical-kick-degeneration}
Assume $\phi\in H^1_{\mathrm{per}}(0,1)$, $\|\phi'\|_{L^2(0,1)}>0$, $b>1$, $0<a<1<ab$, and $\kappa\neq0$. Let $\zeta_N$ be defined by equation~\eqref{eq:classical-rhoN}. Then

\begin{equation*}
\left\|
\Kcal_{N,h_N}^{\mathrm{cl}}
-
\Kcal_{N-1,h_N}^{\mathrm{cl}}
\right\|_{L^2(dx\otimes\gamma)}
=
\frac{|\kappa|h_N}{2}
(ab)^N\zeta_N.
\end{equation*}

Consequently:

\begin{enumerate}[label=\textup{(\roman*)},leftmargin=2.6em]
\item for every fixed $h>0$, the classical kick maps are not Cauchy in $L^2$;
\item a necessary condition for adjacent-resolution consistency is

\begin{equation*}
h_N(ab)^N
\longrightarrow
0.
\end{equation*}
\end{enumerate}
\end{theorem}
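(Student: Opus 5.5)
The plan is to reduce everything to Theorem~\ref{thm:classical-force-growth}. First I compute the difference of the two kick maps pointwise. Both maps leave the position coordinate unchanged, so the first component of the difference is zero. In the second component the momentum $p$ cancels, which leaves
$$
\Kcal_{N,h_N}^{\mathrm{cl}}(x,p)-\Kcal_{N-1,h_N}^{\mathrm{cl}}(x,p)
=\left(0,-\frac{h_N}{2}\left[U_N'(x)-U_{N-1}'(x)\right]\right).
$$
The baseline $V$ does not depend on $N$, so $U_N'-U_{N-1}'=\kappa(W_N'-W_{N-1}')=\kappa(ab)^N\phi'(b^Nx)$. In particular, the difference does not depend on $p$.

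Next I integrate the square of this difference against $dx\otimes\gamma(dp)$ over $(0,1)\times\R$. Because the integrand is independent of $p$ and $\gamma$ is a probability measure, Tonelli's theorem removes the $p$-integral with total weight one. What remains is $\frac{\kappa^2h_N^2}{4}\|W_N'-W_{N-1}'\|_{L^2(0,1)}^2$. Applying equation~\eqref{eq:classical-increment-exact} and taking the square root gives the stated identity $\frac{|\kappa|h_N}{2}(ab)^N\zeta_N$.

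For the consequences, I use $\zeta_N\to\|\phi'\|_{L^2(0,1)}>0$ from Theorem~\ref{thm:classical-force-growth}. Then $\zeta_N\ge c_0>0$ for all large $N$.

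For (i), take $h_N\equiv h$ fixed. The adjacent distances are then at least $\frac{|\kappa|h c_0}{2}(ab)^N$, which tends to infinity because $ab>1$ and $\kappa\neq0$. Adjacent terms of a Cauchy sequence must have distances tending to zero, so the kick maps are not Cauchy in $L^2$.

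For (ii), I read adjacent-resolution consistency as the requirement that the adjacent $L^2$ distances tend to zero. Since $\zeta_N$ is bounded below for large $N$ and $|\kappa|>0$, the identity shows this forces $h_N(ab)^N\to0$.

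I do not expect a real obstacle here. The one point needing care is checking that the $L^2(dx\otimes\gamma)$ norm of a vector-valued map is well defined even though the maps themselves need not lie in $L^2$, since $p$ may be unbounded under $\gamma$. This is settled by noting that only the difference is measured, and the difference is independent of $p$ and square integrable, because $\phi'\in L^2_{\mathrm{per}}$ makes $\phi'(b^N\cdot)$ square integrable on $(0,1)$.
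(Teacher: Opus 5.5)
Your proposal is correct and follows the paper's proof. Both reduce the kick difference to the $p$-independent momentum component $-\frac{\kappa h_N}{2}\left(W_N'-W_{N-1}'\right)$, integrate out $p$ against the probability measure $\gamma$, and invoke Theorem~\ref{thm:classical-force-growth}; your lower bound $\zeta_N\geq c_0>0$ for (i)--(ii) and your remark on the well-definedness of the distance only spell out what the paper leaves implicit.
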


\begin{proof}
The position components coincide. Both adjacent resolutions are compared at the common step size $h_N$. The momentum-component difference is

$$
-\frac{\kappa h_N}{2}
\left[
W_N'(x)-W_{N-1}'(x)
\right].
$$

The result follows from Theorem~\ref{thm:classical-force-growth}; integration over $p$ contributes the factor one.
\end{proof}

For any fixed multiplicative ratio $c>1$, the corresponding split kick
$$
\Kcal_N^{(c)}(x,p)
=
\left(
 x,
 p-\frac{h}{2}
 \left[
 V'(x)+\kappa\J_cW_N(x)
 \right]
\right)
$$
is geometrically consistent on compact intervals away from zero. Indeed, for $I=[r,R]\subset(0,\infty)$,
$$
\sup_{(x,p)\in I\times\R}
\left|
\Kcal_N^{(c)}(x,p)
-
\Kcal_{N-1}^{(c)}(x,p)
\right|
\leq
\frac{|\kappa|h\|\phi\|_{L^\infty}}{(c-1)r}
a^N.
$$
The fixed additive kick has the same $O(ha^N)$ refinement behavior. Thus the algorithmic contrast is
$$
\boxed{
\begin{aligned}
\text{classical half-kick increment}
&\asymp
h_N(ab)^N,
\\
\text{fixed finite-scale half-kick increment}
&=
O\left(ha^N\right).
\end{aligned}
}
$$
The matched choice $c=b$ is not needed for this convergence estimate; it is distinguished by the dilation covariance and exact closure established in equation~\eqref{eq:scale-covariance} and Proposition~\ref{prop:matched-closure}.

\section{Algorithms and numerical experiments}
\label{sec:algorithms}

The experiments focus on the scale-structured setting for which the Jackson construction is intrinsic. They address three questions: whether the posterior or forward model stabilizes under nested refinement, whether exact differentiation remains stable at the same levels, and how the finite-scale proposal fields behave under the same refinement. Exact-gradient, additive, and mismatched multiplicative fields are retained as comparison baselines, and every method uses the same finite-resolution target potential in the Metropolis test.

\subsection{Comparison protocol}
\label{subsec:comparison-protocol}

Write
$$
U_N(\theta)
=
V(\theta)+\Lcal_N(\theta),
$$
where $V$ is the smooth prior potential and $\Lcal_N$ is the finite-resolution negative log-likelihood. We compare
\begin{align*}
G_N^{\mathrm{grad}}(\theta)
&=
\nabla U_N(\theta),
\\
G_N^{\mathrm{add}}(\theta)
&=
\nabla V(\theta)+\Delta_\delta\Lcal_N(\theta),
\\
G_N^{\mathrm{mis}}(\theta)
&=
\nabla V(\theta)+\J_{\sqrt b}\Lcal_N(\theta),
\\
G_N^{\mathrm{match}}(\theta)
&=
\nabla V(\theta)+\J_b\Lcal_N(\theta).
\end{align*}
In several dimensions, each quotient is applied coordinatewise. For example,
$$
\left[\J_c\Lcal_N(\theta)\right]_j
=
\frac{
\Lcal_N\left(\theta+(c-1)\theta_je_j\right)-\Lcal_N(\theta)
}{(c-1)\theta_j}.
$$
The additive increment is fixed across refinement levels. The choice
$c=\sqrt b$ preserves multiplicative displacement but is not matched to the
discrete recursion, whereas $c=b$ uses the intrinsic dilation of the
Weierstrass hierarchy. Here ``matched'' refers to this dilation ratio; the
nonlinear likelihood is not assumed to satisfy the exact closure of
Proposition~\ref{prop:matched-closure}.

The target is supported on the positive parameter domain. To preserve the
involutive kick--drift--kick proposal at the boundary, the proposal fields are
evaluated through finite extensions of the prior gradient and forward model to
the ambient Euclidean space, while the target potential is set to $+\infty$
outside its support. The leapfrog trajectory is therefore completed without
early truncation, and an endpoint outside the support is rejected by the
Metropolis test.

All chains use standard Gaussian momenta and a symmetric kick--drift--kick proposal. The step size is selected from a fixed logarithmic candidate grid by a pilot chain whose acceptance is closest to $0.7$. The two one-dimensional inverse problems use 12 independent chains, 3500 retained samples, 800 burn-in iterations, and pilot runs of 300 retained samples after 120 burn-in iterations. The scale problem uses five leapfrog steps and the porous-medium problem uses three; their additive increment is $\delta=0.08$. The Darcy experiment uses eight independent chains, 1800 retained samples, 500 burn-in iterations, three leapfrog steps, and pilot runs of 180 retained samples after 80 burn-in iterations, with $\delta=0.06$.

The reported diagnostics are posterior or forward convergence, sensitivity behavior, pilot-selected step sizes, and repeated-chain acceptance.

\subsection{Bayesian inversion of a microstructure scale}
\label{subsec:scale-inverse-experiment}

All scale-structured experiments below use the periodic profile $\phi(t)=\cos(2\pi t)$. The first inverse problem observes a discretely scale-invariant signal,
$$
y_j
=
W_{N_{\mathrm{ref}}}(t_j\theta^\dagger)+\eta_j,
\qquad
\eta_j\sim\mathcal N(0,\sigma^2),
$$
with observation times $t_j\in\{0.55,0.80,1.05,1.30,1.55,1.80\}$, $N_{\mathrm{ref}}=80$, $\theta^\dagger=1$, $a=0.8$, $b=1.4$, and $\sigma=0.5$. A Gaussian prior with mean $1$ and standard deviation $0.025$ is restricted to the positive half-line. The finite-resolution potential is
$$
U_N(\theta)
=
\frac{(\theta-1)^2}{2(0.025)^2}
+
\frac{1}{2\sigma^2}
\sum_{j=1}^{6}
\left[W_N(t_j\theta)-y_j\right]^2.
$$
Direct quadrature supplies a reference posterior at each level.

The two panels in Figure~\ref{fig:scale-posterior-acceptance} have different roles. Figure~\ref{fig:scale-posterior} checks convergence of the target distribution, whereas Figure~\ref{fig:scale-acceptance} records acceptance after resolution-dependent pilot tuning.

\begin{figure}[H]
\centering
\begin{subfigure}[t]{0.49\textwidth}
\centering
\includegraphics[width=\textwidth]{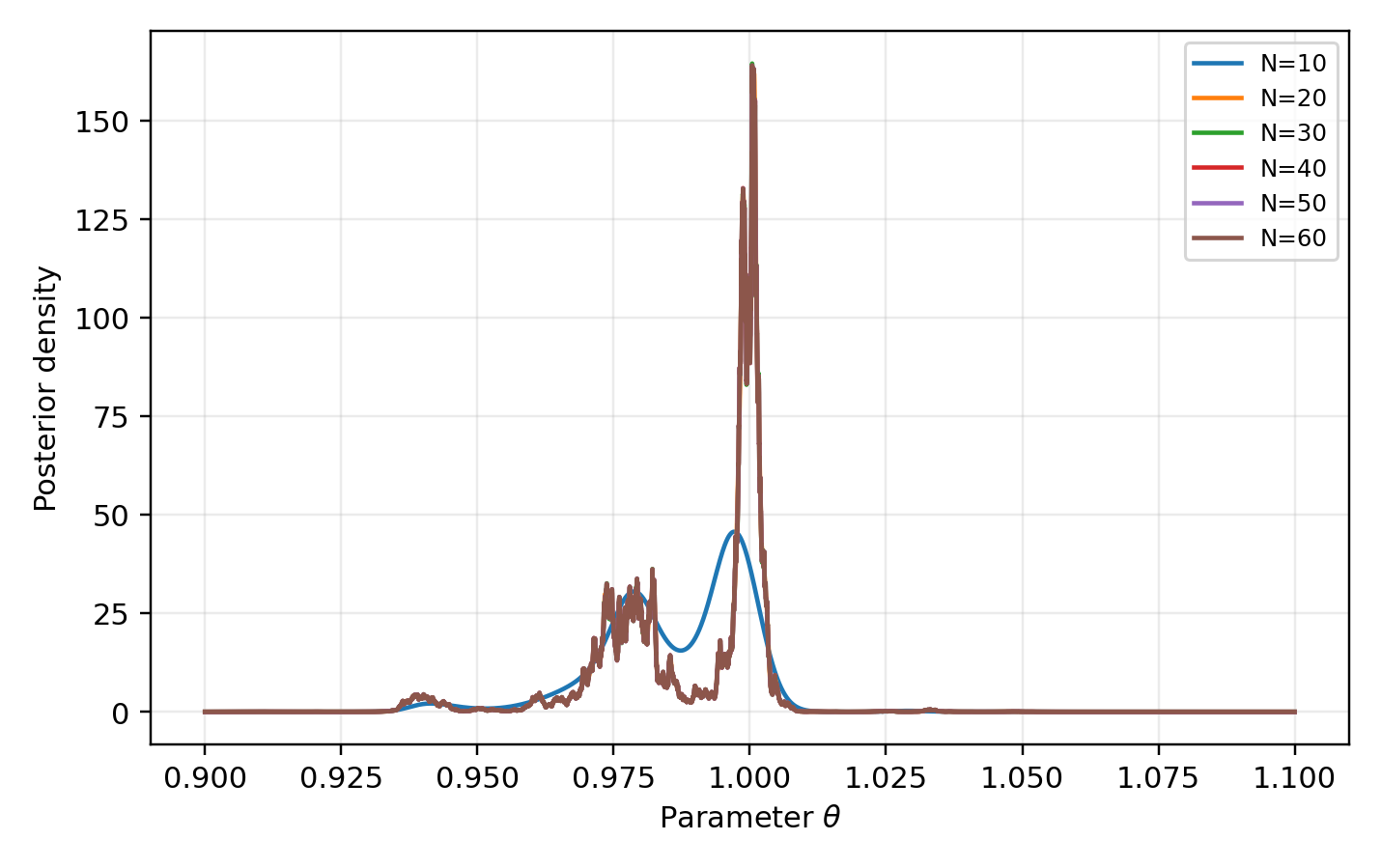}
\caption{Posterior densities.}
\label{fig:scale-posterior}
\end{subfigure}
\hfill
\begin{subfigure}[t]{0.49\textwidth}
\centering
\includegraphics[width=\textwidth]{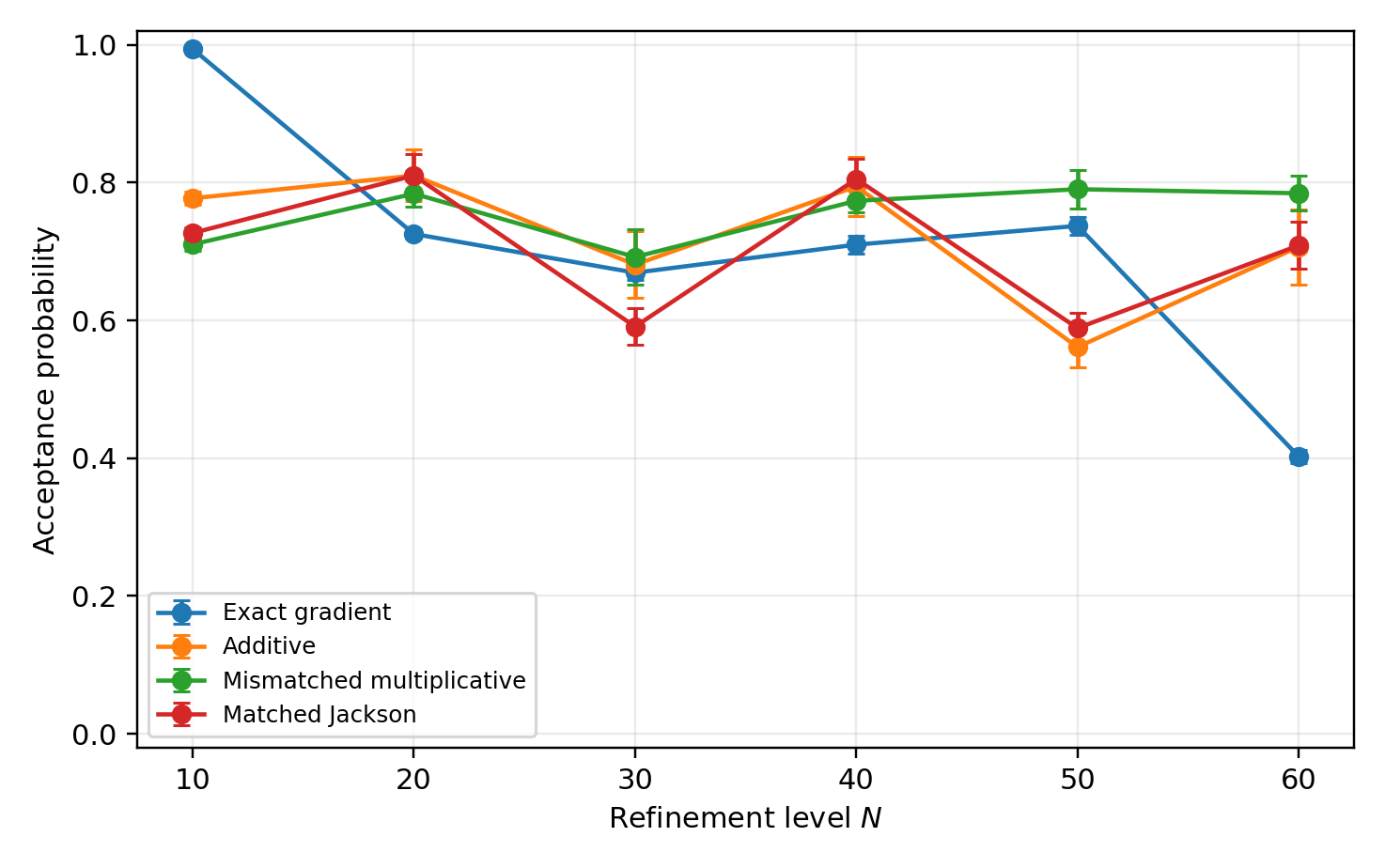}
\caption{Pilot-tuned acceptance.}
\label{fig:scale-acceptance}
\end{subfigure}
\caption{Bayesian microstructure-scale inversion. The posterior is effectively stable from $N=20$ onward. The right panel shows acceptance after pilot step-size selection. Error bars show one standard deviation over 12 independent chains.}
\label{fig:scale-posterior-acceptance}
\end{figure}

As seen in Figure~\ref{fig:scale-posterior}, the quadrature mean changes from $0.98659$ at $N=10$ to $0.98960$ at $N=20$ and then remains stable to the displayed precision; the posterior standard deviation similarly stabilizes near $0.01469$. Nevertheless, the pilot-selected exact-gradient step size decreases from $1.5\times10^{-3}$ at $N=10$ to $2.0\times10^{-5}$ at $N=60$. The target has therefore stabilized while the exact-gradient proposal still requires a factor-$75$ reduction in step size.

The finest-level proposal scales and acceptance rates are summarized in Table~\ref{tab:scale-tuning}. The three finite-scale fields retain acceptance between $0.707$ and $0.785$ with step sizes between $2.36\times10^{-4}$ and $4.37\times10^{-4}$, whereas exact-gradient HMC uses $2.00\times10^{-5}$ and still has acceptance $0.402$. Together with Figure~\ref{fig:scale-acceptance}, this supports a resolution-tuning distinction between exact differentiation and fixed finite-scale fields.

\begin{table}[H]
\centering
\small
\caption{Pilot-selected step sizes and repeated-chain acceptance at $N=60$ for the Bayesian scale inverse problem.}
\label{tab:scale-tuning}
\begin{tabular}{lrr}
\toprule
Method & Step size & Acceptance \\
\midrule
Exact gradient & $2.00\times10^{-5}$ & $0.402$ \\
Additive & $4.37\times10^{-4}$ & $0.707$ \\
Mismatched multiplicative & $2.36\times10^{-4}$ & $0.785$ \\
Matched Jackson & $4.37\times10^{-4}$ & $0.709$ \\
\bottomrule
\end{tabular}
\end{table}

\FloatBarrier
\subsection{One-dimensional porous-medium inverse problem}
\label{subsec:porous-experiment}

We next consider a one-dimensional elliptic flow model for a hierarchical porous medium.  Periodic porous structures with multiple separated physical scales are routinely treated by multiscale and reiterated homogenization \cite{HeWangPindera2019,NiuShenXu2020}.  Our coefficient is a stylized scale-geometric proxy for that setting: the PDE, rather than the posterior itself, contains the nested microstructure,
\begin{align*}
-\frac{d}{dx}
\left[k_N(x;\theta)\frac{dp_N}{dx}(x;\theta)\right]
&=1,
\qquad x\in(0,1),
\\
p_N(0;\theta)&=p_N(1;\theta)=0,
\end{align*}
with
$$
k_N(x;\theta)
=
\exp\left\{\rho W_N\left(\theta(0.3+x)\right)\right\}.
$$
The positive parameter $\theta$ dilates the common geometric hierarchy and can be interpreted as an uncertain calibration of the characteristic microstructure scale.  This is the nested analogue of the single periodic calibration parameter in Section~\ref{subsec:pde-origin}.
We use $a=0.8$, $b=1.4$, $\rho=0.35$, true parameter $\theta^\dagger=1$, and reference level $N_{\mathrm{ref}}=60$. Pressure is observed at $x_j\in\{0.15,0.29,0.43,0.57,0.71,0.85\}$ with independent Gaussian noise of standard deviation $0.002$. A Gaussian prior centered at $1$ with standard deviation $0.05$ is restricted to the positive half-line. The forward solution has an explicit flux representation, and the exact finite-resolution sensitivity is obtained by differentiating the same quadrature formula used in the forward solver.

As in the scale problem, Figure~\ref{fig:porous1d-posterior-acceptance} separates target convergence from resolution-dependent tuning. Figure~\ref{fig:porous1d-posterior} displays the posterior densities, and Figure~\ref{fig:porous1d-acceptance} displays the pilot-tuned acceptance rates.

\begin{figure}[H]
\centering
\begin{subfigure}[t]{0.49\textwidth}
\centering
\includegraphics[width=\textwidth]{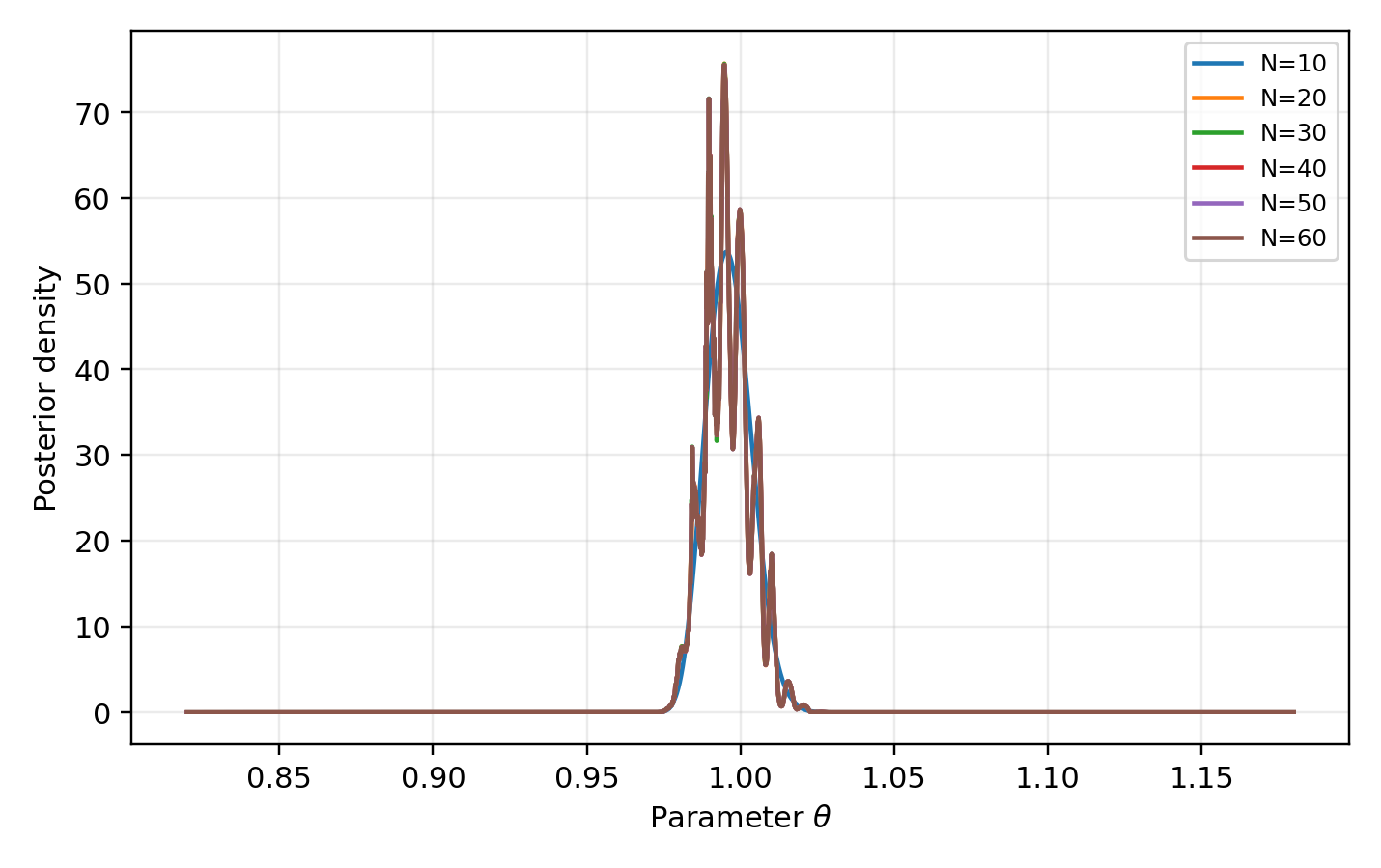}
\caption{Posterior densities.}
\label{fig:porous1d-posterior}
\end{subfigure}
\hfill
\begin{subfigure}[t]{0.49\textwidth}
\centering
\includegraphics[width=\textwidth]{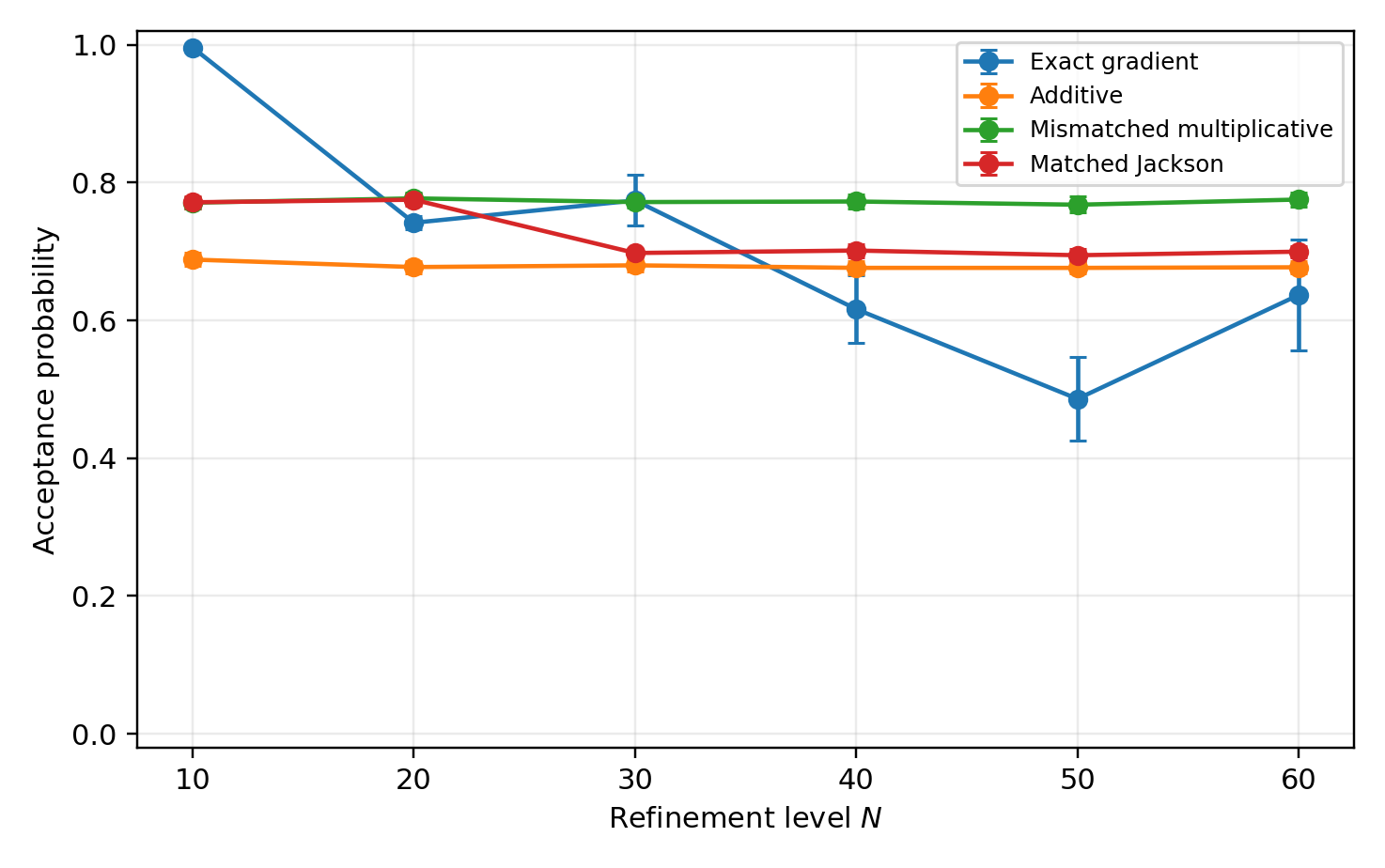}
\caption{Pilot-tuned acceptance.}
\label{fig:porous1d-acceptance}
\end{subfigure}
\caption{One-dimensional porous-medium inversion. The posterior stabilizes rapidly, while the exact-gradient proposal requires a strongly decreasing step size. Error bars show one standard deviation over 12 independent chains.}
\label{fig:porous1d-posterior-acceptance}
\end{figure}

Figure~\ref{fig:porous1d-posterior} shows that the quadrature mean changes from $0.99634$ at $N=10$ to $0.99595$ at $N=20$ and remains near $0.99594$ thereafter; the posterior standard deviation stabilizes near $0.00744$. In contrast, the exact-gradient step size decreases from $2.0\times10^{-3}$ at $N=10$ to $3.0\times10^{-5}$ at $N=60$. Pilot tuning keeps the finest-level exact-gradient acceptance at $0.637$, but only after reducing the step size by a factor of about $67$.

Table~\ref{tab:porous1d-tuning} shows that the finite-scale fields use substantially larger finest-level steps while maintaining comparable acceptance. The matched field uses the largest tested step, $2.00\times10^{-3}$, with acceptance $0.700$; the additive and mismatched fields use $6.02\times10^{-4}$ and $1.81\times10^{-4}$, respectively. This result is consistent with the intrinsic dilation being well aligned with this particular likelihood.

\begin{table}[H]
\centering
\small
\caption{Pilot-selected step sizes and repeated-chain acceptance at $N=60$ for the one-dimensional porous-medium inverse problem.}
\label{tab:porous1d-tuning}
\begin{tabular}{lrr}
\toprule
Method & Step size & Acceptance \\
\midrule
Exact gradient & $3.00\times10^{-5}$ & $0.637$ \\
Additive & $6.02\times10^{-4}$ & $0.677$ \\
Mismatched multiplicative & $1.81\times10^{-4}$ & $0.775$ \\
Matched Jackson & $2.00\times10^{-3}$ & $0.700$ \\
\bottomrule
\end{tabular}
\end{table}

\FloatBarrier
\subsection{Two-dimensional Darcy-flow inverse problem}
\label{subsec:darcy2d-experiment}

The multidimensional test uses
\begin{align}
-\nabla\cdot\left[k_N(s;\theta)\nabla p_N(s;\theta)\right]
&=1,
\qquad s\in D=(0,1)^2,
\label{eq:darcy2d-pde}
\\
p_N(s;\theta)&=0,
\qquad s\in\partial D,
\nonumber
\end{align}
with
\begin{equation}
k_N(s;\theta)
=
\exp\left\{
\frac{\rho}{2}
\left[
W_N\left(\theta_1(0.2+s_1)\right)
+
W_N\left(\theta_2(0.2+s_2)\right)
\right]
\right\}.
\label{eq:darcy2d-permeability}
\end{equation}
We take $a=0.8$, $b=1.4$, $\rho=0.18$, true parameter
$$
\theta^\dagger=(1.02,0.94),
$$
and a Gaussian prior centered at $(1,1)$ with covariance $0.12^2I$, restricted to the positive quadrant. Nine pressure observations on a $3\times3$ interior grid are generated at $N_{\mathrm{ref}}=24$ with independent Gaussian noise of standard deviation $0.003$.

The PDE is discretized on a $21\times21$ interior Cartesian grid using a conservative five-point flux scheme with arithmetic face conductivities. The exact gradient of the implemented negative log-likelihood uses one forward and one discrete adjoint solve. Each finite-scale coordinate quotient uses repeated evaluations of the same forward model. The coefficient and state fields in Figure~\ref{fig:darcy2d-fields} show the resolved spatial scales and the smoothing action of the elliptic forward map.

\begin{figure}[H]
\centering
\begin{subfigure}[t]{0.49\textwidth}
\centering
\includegraphics[width=\textwidth]{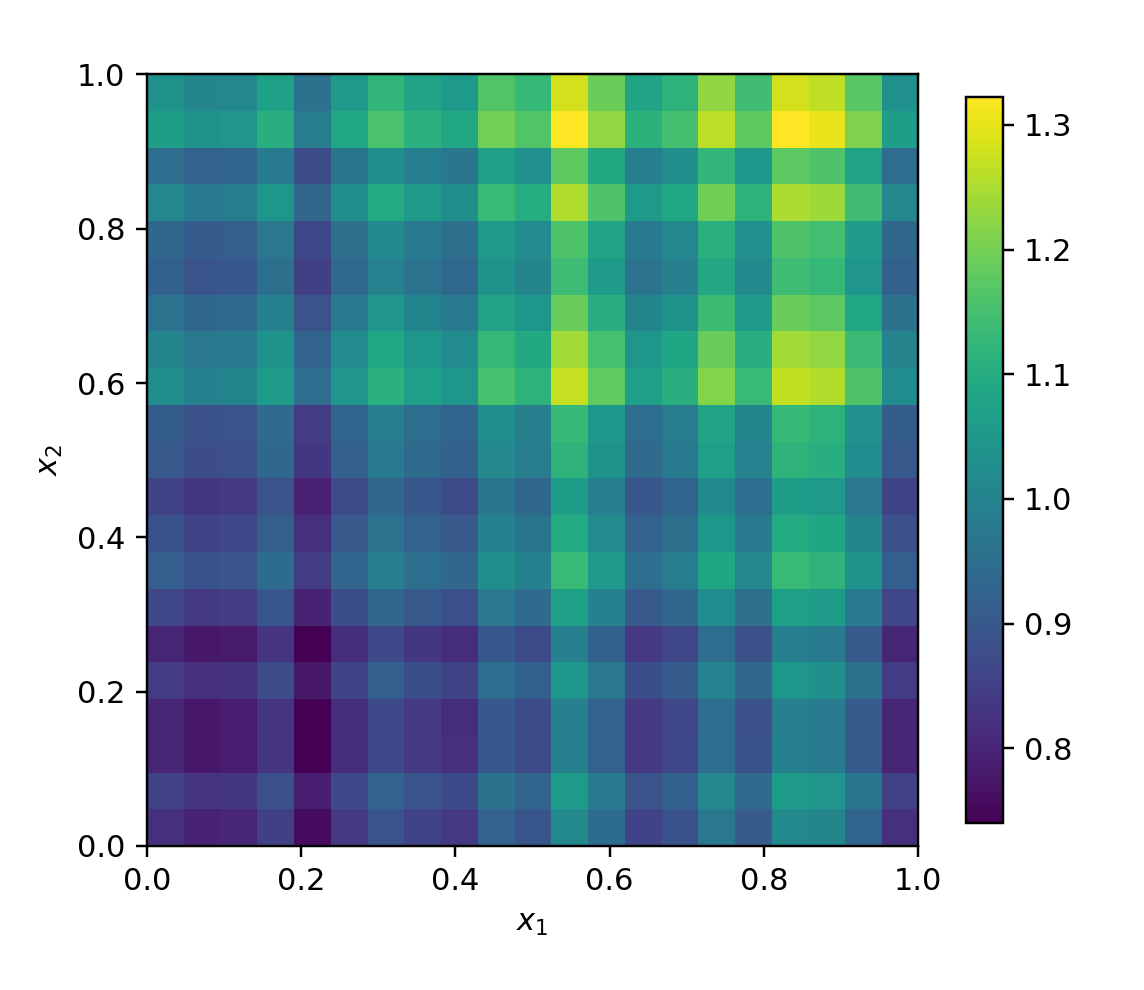}
\caption{Reference permeability.}
\end{subfigure}
\hfill
\begin{subfigure}[t]{0.49\textwidth}
\centering
\includegraphics[width=\textwidth]{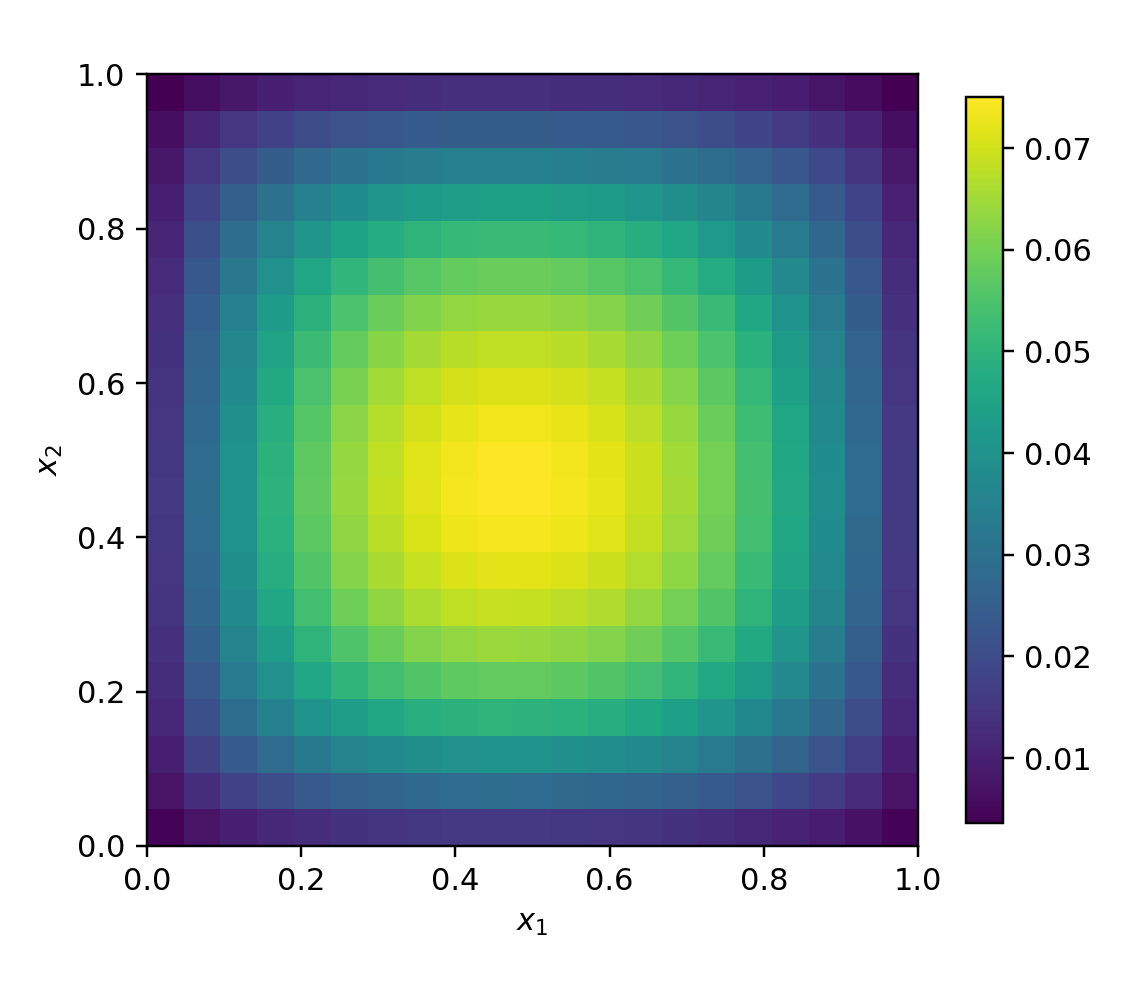}
\caption{Reference pressure.}
\end{subfigure}
\caption{Reference coefficient and state fields at $\theta^\dagger$ and $N_{\mathrm{ref}}=24$.}
\label{fig:darcy2d-fields}
\end{figure}

The main Darcy diagnostic is Figure~\ref{fig:darcy2d-resolution-acceptance}. Its left panel compares forward and gradient errors relative to $N_{\mathrm{ref}}=24$, both evaluated at the prior mean. The right panel reports acceptance after pilot step-size selection for the four proposal fields.

\begin{figure}[H]
\centering
\begin{subfigure}[t]{0.49\textwidth}
\centering
\includegraphics[width=\textwidth]{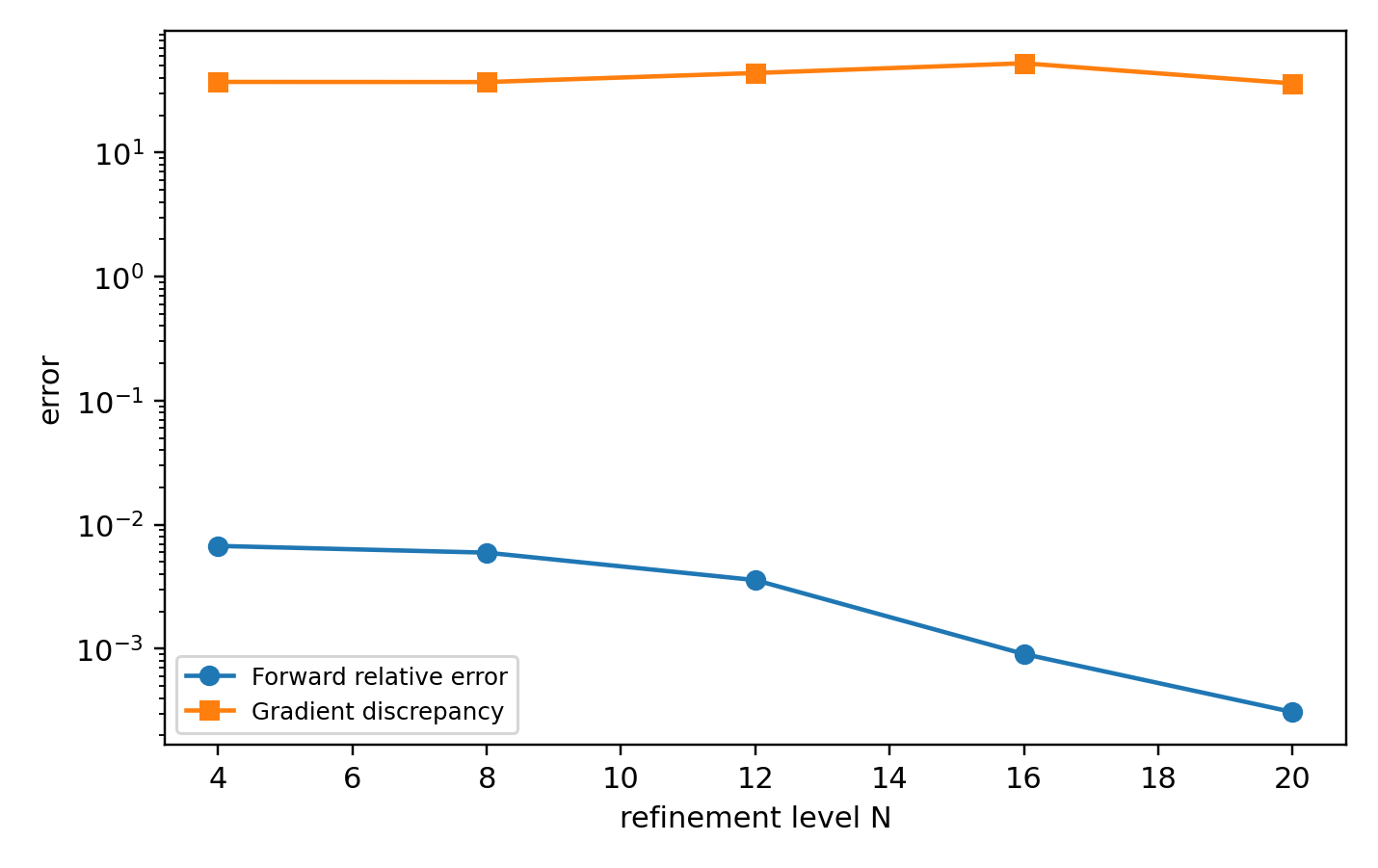}
\caption{Errors relative to $N_{\mathrm{ref}}=24$.}
\label{fig:darcy2d-separation}
\end{subfigure}
\hfill
\begin{subfigure}[t]{0.49\textwidth}
\centering
\includegraphics[width=\textwidth]{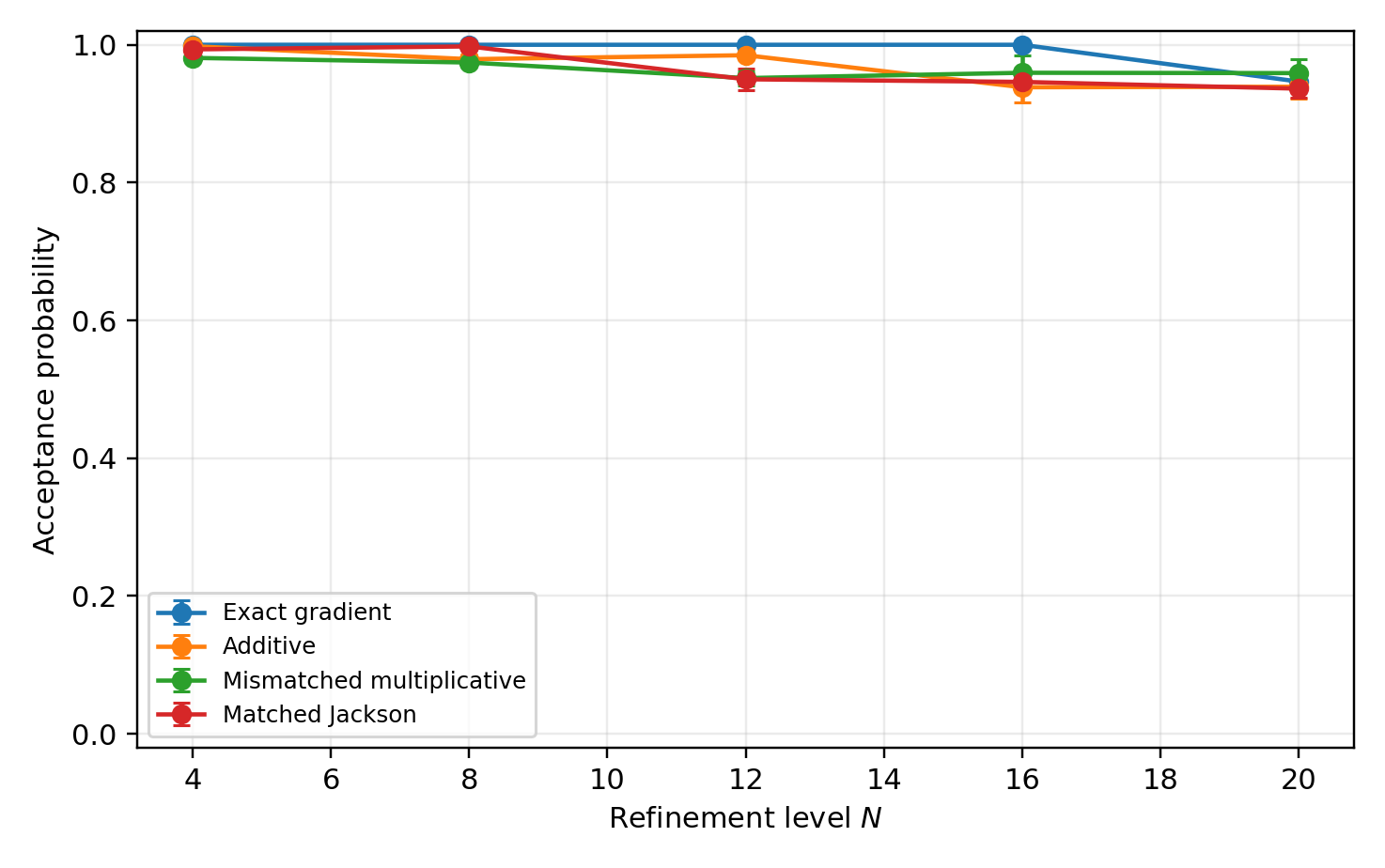}
\caption{Pilot-tuned acceptance.}
\label{fig:darcy2d-acceptance-panel}
\end{subfigure}
\caption{Resolution behavior in the Darcy problem. The forward error and posterior-gradient discrepancy are both evaluated at the prior mean; the former decreases steadily, whereas the latter remains of order $10^1$ over the tested levels. The right panel shows pilot-tuned acceptance.}
\label{fig:darcy2d-resolution-acceptance}
\end{figure}

As shown in Figure~\ref{fig:darcy2d-separation}, the forward relative error at the prior mean decreases from $6.74\times10^{-3}$ at $N=4$ to $3.08\times10^{-4}$ at $N=20$. In contrast, the norm of the exact-gradient difference relative to $N_{\mathrm{ref}}=24$ takes the values $37.2$, $37.1$, $43.9$, $52.6$, and $36.2$. The forward approximation therefore improves steadily while the sensitivity discrepancy does not. This extends the forward--sensitivity separation to a multidimensional PDE setting. 

\FloatBarrier
\subsection{Interpretation and limitations}

The experiments exhibit three refinement patterns. Figures~\ref{fig:scale-posterior-acceptance} and
\ref{fig:porous1d-posterior-acceptance}, together with
Tables~\ref{tab:scale-tuning} and \ref{tab:porous1d-tuning}, show posterior
stabilization accompanied by strong exact-gradient retuning in the two nested
Weierstrass-based inverse problems. The porous-medium experiment also shows
that the intrinsic dilation can support a substantially larger tuned step in
this model. Figure~\ref{fig:darcy2d-separation} shows that forward convergence
need not imply sensitivity convergence in a two-dimensional inverse problem.

The kernel analysis is local and concerns a fixed number of proposal steps on
admissible compact sets; it does not provide a resolution-uniform spectral-gap
or mixing-time estimate.

\section{Conclusion}
\label{sec:conclusion}

This paper isolates a refinement regime in which probability-level approximation and sensitivity-based computation have different limits. The mechanism is not specific to Weierstrass functions: the elementary wiggly family $r_\varepsilon\psi(\cdot/\varepsilon)$ gives the basic scaling, and the periodic elliptic problem in Section~\ref{subsec:pde-origin} shows that the same mismatch can be generated by a standard homogenization mechanism rather than inserted directly into the target.  The single-scale PDE corrector is small in the forward map and likelihood but retains an $O(1)$ exact sensitivity when the inverse parameter enters the fast variable. The nested Weierstrass family then models repeated corrector contributions across a geometric hierarchy, makes the obstruction more persistent by accumulating all resolved scales, sharpens the mismatch into the geometric contrast $a^N$ versus $(ab)^N$, and supplies an intrinsic multiplicative scale for the Jackson construction.

Metropolized finite-scale proposals separate proposal geometry from the unstable exact derivative. Measurable kick--drift--kick maps remain exact after Metropolis correction even when the proposal field is not the true gradient, and local field convergence propagates to fixed-length proposals, acceptance functions, and bounded-Lipschitz kernels. The matched Jackson quotient contributes additional structure through dilation covariance and exact closure of the canonical rough component at the intrinsic scale.

The numerical results concentrate on the scale-structured construction. The Weierstrass-based inverse problems show posterior stabilization together with resolution-dependent exact-gradient retuning, while the Darcy calculation demonstrates forward convergence without sensitivity convergence in two dimensions. Together with the analysis, these results show that a model-matched finite-scale field can remain refinement-consistent even when the exact infinitesimal force does not. The proposal and kernel limits are local and fixed-length rather than resolution-uniform long-time mixing results.

\appendix

\section{Uniform Hölder estimates for the Weierstrass family}
\label{app:holder}

This appendix derives the uniform Hölder estimate used in Section~\ref{subsec:holder-finite-scale} and in the compact proposal analysis. The main point is that the modulus is independent of the truncation level, so it remains valid in the rough limit.

Assume $\phi$ is one-periodic and Lipschitz with constant $L_\phi$. Let $\alpha\in(0,1]$ satisfy
$$
ab^\alpha<1.
$$

Set
$$
C_\phi
=
\max\left\{L_\phi,2\|\phi\|_{L^\infty}\right\}.
$$
For every $t\geq0$ and $\alpha\in(0,1]$,
$$
\min\left\{L_\phi t,2\|\phi\|_{L^\infty}\right\}
\leq
C_\phi t^\alpha.
$$
Indeed, $t\leq t^\alpha$ for $t\leq1$, while $1\leq t^\alpha$ for $t\geq1$. Periodicity and Lipschitz continuity therefore imply
\begin{align*}
\left|
\phi\left(b^nx\right)-\phi\left(b^ny\right)
\right|
&\leq
\min\left\{L_\phi b^n|x-y|,2\|\phi\|_{L^\infty}\right\}
\\
&\leq
C_\phi b^{\alpha n}|x-y|^\alpha.
\end{align*}
It follows that, uniformly in $N$,
\begin{align*}
|W_N(x)-W_N(y)|
&\leq
C_\phi|x-y|^\alpha
\sum_{n=0}^{N}(ab^\alpha)^n
\\
&\leq
\frac{C_\phi}{1-ab^\alpha}|x-y|^\alpha.
\end{align*}
Passing to the uniformly convergent limit gives the same estimate for $W$. Thus the family $\{W_N\}$ and its limit share a common Hölder modulus for every $\alpha\in(0,1]$ satisfying $ab^\alpha<1$.

On each compact interval away from zero, the closed formula equation~\eqref{eq:matched-closed-force} implies that $\J_bW$ inherits the same Hölder exponent, up to the smooth weight $x^{-1}$.

\end{document}